\newtheorem{theorem}{Theorem}[section]
\newtheorem{lemma}[theorem]{Lemma}
\newtheorem{corollary}[theorem]{Corollary}
\newtheorem{proposition}[theorem]{Proposition}
\begin{document}

\centerline{{\bf Some remarks on Davie's uniqueness theorem}}

\vskip .2in

\centerline{{\bf
A.V.~Shaposhnikov\footnote{Department
of Mechanics and Mathematics, Moscow State University, Moscow, Russia, e-mail: shal1t7@mail.ru}}}

\vskip .2in
\centerline{{\bf Abstract}}

\vskip .1in

We present a new approach to Davie's theorem on the uniqueness of solutions to the equation $dX_t = b(t, X_t)\,dt + dW_t$
for almost all Brownian paths.
A generalization of this result and a discussion of some close problems are given.

AMS Subject Classification: 60H10, 34F05, 46N20.

Keywords: Brownian motion, stochastic differential equation, pathwise uniqueness

\vskip .2in

\section{Introduction}

In this paper we consider the stochastic differential equation
\begin{equation}\label{eq:main}
X_{t} = x + W_{t} + \int_{0}^{t}b(s, X_s)\,ds.
\end{equation}
In the paper \cite{D} A.M. Davie proved the following theorem:
\begin{theorem}\label{th:borel_case}
Let $b: [0, T]\times \mathbb{R}^{d}\mapsto\mathbb{R}^{d}$ be a Borel measurable bounded mapping.
Then for almost all Brownian paths the equation \ref{eq:main} has exactly one solution.
\end{theorem}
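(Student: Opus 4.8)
The plan is to turn the absence of any pointwise regularity of $b$ into a regularity statement about $b$ \emph{averaged along the Brownian trajectory}, where the Gaussian smoothing of the noise does all the work. Fix a realization of $W$ and suppose $X$ and $Y$ both solve \eqref{eq:main} for this path. Writing $X_t=x+W_t+A_t$ and $Y_t=x+W_t+B_t$ with $A_t=\int_0^t b(s,X_s)\,ds$ and $B_t=\int_0^t b(s,Y_s)\,ds$, uniqueness is equivalent to showing that $Z_t:=A_t-B_t=X_t-Y_t$ vanishes identically. Since
\[
Z_t=\int_0^t\big(b(s,W_s+x+A_s)-b(s,W_s+x+B_s)\big)\,ds
\]
and $b$ is merely bounded and measurable, the difference of the integrands cannot be controlled by $|A_s-B_s|$ pointwise, so the classical Gronwall argument is unavailable; the content of the theorem is that averaging in time against the noise restores exactly this kind of control.

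The engine of the proof is a quantitative estimate for the random field
\[
I(s,t,\xi,\eta):=\int_s^t\big(b(r,W_r+\xi)-b(r,W_r+\eta)\big)\,dr,\qquad 0\le s\le t\le T,\ \xi,\eta\in\mathbb{R}^d.
\]
First I would prove, for each integer $m$, the moment bound
\[
\mathbb{E}\,\big|I(s,t,\xi,\eta)\big|^{2m}\le C_m\,\|b\|_\infty^{2m}\,|\xi-\eta|^{2m}\,(t-s)^{m}.
\]
This is pure Gaussian analysis: expanding the $2m$-th power and ordering the time variables over a simplex, one evaluates the resulting expectation by the Markov property as an iterated convolution of the increments $b(r,\cdot+\xi)-b(r,\cdot+\eta)$ against the heat kernel, and estimates each convolution through the semigroup bounds $\|\nabla P_h g\|_\infty\le C\|g\|_\infty/\sqrt h$ and $\|\nabla^2 P_h g\|_\infty\le C\|g\|_\infty/h$. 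Each factor then contributes a gain $\min(1,|\xi-\eta|/\sqrt{\mathrm{gap}})$, and integrating these gains over the simplex of consecutive time gaps produces the stated powers (the exponents can be read off directly from Brownian scaling). With these moment bounds in hand, the Kolmogorov continuity criterion upgrades them to an almost sure statement: on every bounded set of shifts the map $\xi\mapsto\int_s^t b(r,W_r+\xi)\,dr$ is Lipschitz (up to an arbitrarily small loss in the exponent), with
\[
|I(s,t,\xi,\eta)|\le C(\omega)\,|\xi-\eta|\,(t-s)^{1/2},
\]
and one obtains the analogous joint control of the increments of $I$ in $s,t,\xi,\eta$. The crucial feature is the factor $(t-s)^{1/2}$: averaging over a short time interval makes the Lipschitz constant small, which is precisely the gain the noise provides.

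The final step feeds this regularity back into the equation for $Z$, and here lies the main obstacle. The shifts occurring in $Z_t$, namely $x+A_s$ and $x+B_s$, are \emph{not} frozen: they vary with $s$ and depend on the unknown solutions themselves, so the bound on $I$ cannot be applied directly. I would discretize $[0,t]$ by a mesh $0=t_0<\dots<t_N=t$ of step $h$ and replace, on each $[t_k,t_{k+1}]$, the moving shifts by the frozen values $x+A_{t_k}$ and $x+B_{t_k}$; the frozen contribution is then $I(t_k,t_{k+1},x+A_{t_k},x+B_{t_k})$, and the freezing error is controlled by the regularity of $I$ in the shift variables together with the a priori Lipschitz bound $|A_s-A_{t_k}|\le\|b\|_\infty(s-t_k)$. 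The delicate point is that the frozen terms must be summed \emph{coherently} rather than bounded one by one (a term-by-term bound loses the gain as $h\to 0$); using the additivity of $I$ in time and the joint space-time H\"older regularity, the sum converges, as $h\to0$, to a nonlinear Young-type integral whose size is governed by the temporal variation of the shifts. This yields an inequality of Gronwall type for $\theta(t):=\sup_{s\le t}|Z_s|$ in which the coefficient multiplying $\theta$ carries a positive power of the interval length; on a sufficiently short interval it forces $\theta\equiv0$, and iterating across $[0,T]$ gives $Z\equiv0$, i.e. uniqueness. Controlling this self-referential, time-dependent shift without destroying the $(t-s)^{1/2}$ gain is the heart of the argument, and it is exactly where the joint regularity of the field $I$, rather than mere pointwise control, is indispensable.
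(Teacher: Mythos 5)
Your route is essentially Davie's original one (moment bounds for $I(s,t,\xi,\eta)$ via iterated heat--kernel convolutions, then a discretized Gronwall argument), not the paper's, and it has a genuine gap at the decisive step. The paper avoids the self-referential Gronwall loop altogether: it invokes the Fedrizzi--Flandoli H\"older flow of strong solutions $X(s,t,x,W)$ (Proposition \ref{pr:flow}) and, for an arbitrary solution $Y$, studies $f(t)=X(x,0,r,W)-X(Y_t,t,r,W)$. The averaging estimate (Corollary \ref{cor:refined_estimate}, made uniform over Lipschitz shifts by chaining over $\varepsilon$-nets in Lemmas \ref{le:entropy}--\ref{le:refined_main_borel_estimate}) gives $|Y_t-X(Y_s,s,t,W)|\leqslant C(t-s)^{4/3}$, and composing with the H\"older modulus of the flow yields $|f(t)-f(s)|\leqslant C(t-s)^{16/15}$; a superlinear telescoping sum then forces $f\equiv 0$. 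Uniqueness is thus reduced to comparison with one distinguished solution, and no closure of a fixed-point inequality for $Z=X-Y$ is ever needed.

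The gap in your final step is concrete. First, your field $I$ is built from \emph{constant} shifts, so the freezing error on each mesh interval again involves time-varying, solution-dependent shifts; controlling it uniformly requires the averaging estimate for all Lipschitz shifts simultaneously, which is a chaining argument over $\varepsilon$-nets in the function space $Lip_N$ (the content of Lemmas \ref{le:entropy} and \ref{le:main_borel_estimate}), not a consequence of Kolmogorov's criterion in the finitely many variables $(s,t,\xi,\eta)$. Second, and more seriously, even granting the sewing/Young machinery you invoke, the first-order bound $|I(s,t,\xi,\eta)|\leqslant C(\omega)|\xi-\eta|^{1-\epsilon}(t-s)^{1/2}$ does not close the argument: since $A_0=B_0=0$ the frozen germ at the origin vanishes, and what comes out is an inequality of the form
$$
|Z_t-Z_s|\ \leqslant\ C\,|Z_s|^{1-\epsilon}(t-s)^{1/2}\ +\ C\,(t-s)^{3/2-\epsilon},
$$
which is satisfied by nonzero functions (e.g. $|Z_t|\sim t^{3/2}$), so it does not force $Z\equiv 0$ on any interval. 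This is exactly why Davie needs second-difference estimates in the spatial variable and a multiscale induction on dyadic time scales rather than a single Gronwall pass; your proposal acknowledges this point as ``the heart of the argument'' but supplies no mechanism that actually closes it. Either carry out Davie's second-order estimates and induction, or adopt the paper's device of comparing with the H\"older flow, which is what converts the problematic inequality into a telescoping sum with increments $o(t-s)$.
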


The proof of Davie is quite self-contained, but rather technically complicated.
In particular it does not rely on the uniqueness of strong solutions.
It turns out that in some cases the pathwise uniqueness can be proved with a slightly simpler approach.
The main idea is to use the H\"older regularity of the flow generated
by the strong solution proved in \cite{FF} and a modification
of the van Kampen uniqueness theorem for ordinary differential equations with a Lipschitz
flow and continuous coefficients (see \cite{VK}).
This approach also enables us to extend Davie's result to some other classes of irregular drifts.

\section{Auxiliary results}

The proof of Davie uses the following estimate:

\begin{proposition}\label{pr:estimate}
Let $b \in C\bigl([0, 1], C^{1}_{b}(\mathbb{R}^{d}, \mathbb{R}^{d})\bigr), \ \|b\|_{\infty} \leqslant 1$. There exist positive constants
$C,\alpha$ (that do not depend on $b$) such that the following inequality holds:
$$
\mathbb{E}\exp\alpha\Bigl|\int_{0}^{1}b'_{x}(t, W_t)\,dt\Bigr|^{2} \leqslant C.
$$
\end{proposition}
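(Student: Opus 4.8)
The plan is to reduce everything to a sub-Gaussian moment bound on the scalar entries and then sum the exponential series. First I would reduce the matrix statement to its entries: since $|M|^2=\sum_{i,j}M_{ij}^2$ and $e^{\alpha|M|^2}=\prod_{i,j}e^{\alpha M_{ij}^2}$, the generalized H\"older inequality over the $d^2$ factors reduces the claim to an estimate of the form $\mathbb{E}\exp(\alpha' Y_{ij}^2)\le C'$ for each
\[
Y_{ij}=\int_0^1 \partial_i b_j(t,W_t)\,dt ,
\]
with constants independent of $b$. Exponential square-integrability of $Y_{ij}$ is in turn equivalent to the sub-Gaussian moment growth $\mathbb{E}\,Y_{ij}^{2n}\le C^n n!$, so the whole problem comes down to the even moments.

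To estimate $\mathbb{E}\,Y_{ij}^{2n}$ I would expand the power as a time-ordered integral
\[
\mathbb{E}\,Y_{ij}^{2n}=(2n)!\int_{0<t_1<\dots<t_{2n}<1}\mathbb{E}\Big[\prod_{k=1}^{2n}\partial_i b_j(t_k,W_{t_k})\Big]\,dt,
\]
and write the inner expectation, via the Markov property, as an iterated integral of the integrand against the Gaussian transition kernels $p_{t_k-t_{k-1}}$. Because only $\|b\|_\infty\le 1$ is available (and nothing on $\partial b$), each factor $\partial_i b_j$ must be removed by integration by parts in space, which transfers the derivative onto the kernels: this replaces $\partial_i b_j$ by $b_j$ (bounded by $1$) at the cost of one spatial derivative on a heat kernel, and each such derivative gains a factor $\|\partial_i p_\tau\|_{L^1}\le c\,\tau^{-1/2}$.

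If every increment absorbed exactly one derivative, the spatial integrals would be bounded by $C^{2n}\prod_k(t_k-t_{k-1})^{-1/2}$ and the Dirichlet--Beta integral
\[
\int_{0<t_1<\dots<t_{2n}<1}\prod_{k=1}^{2n}(t_k-t_{k-1})^{-1/2}\,dt=\frac{\Gamma(1/2)^{2n}}{\Gamma(n+1)}=\frac{\pi^{n}}{n!}
\]
would give $\mathbb{E}\,Y_{ij}^{2n}\le C^{2n}\pi^n\,\frac{(2n)!}{n!}\le (4\pi C^2)^n n!$, which is exactly the desired sub-Gaussian bound. The difficulty, and the main obstacle, is that integration by parts does not let one choose where each derivative lands: the product rule forces derivatives to pile up, and a kernel receiving two derivatives produces a factor $\tau^{-1}$ whose time integral diverges. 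The technical heart is therefore to process the time variables from the innermost outward while propagating not only a sup-norm bound but also a H\"older seminorm $[\,\cdot\,]_{C^\gamma}$ on the partially assembled factor $\varphi$: when the next derivative falls on $\varphi$ one uses $\|\partial_i P_\tau\varphi\|_\infty\le c\,\tau^{-(1-\gamma)/2}[\varphi]_{C^\gamma}$, so the singularity is only the integrable $\tau^{-(1-\gamma)/2}$ rather than $\tau^{-1}$, and one must check that the recursion keeps both the sup-norm and the H\"older seminorm under control. This is precisely where the smoothing of the Gaussian semigroup (equivalently, parabolic cancellation) has to be exploited, and it is the delicate point of the argument.

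Granting the moment bound $\mathbb{E}\,Y_{ij}^{2n}\le C^n n!$, I would finish by expanding the exponential,
\[
\mathbb{E}\exp(\alpha Y_{ij}^2)=\sum_{n\ge 0}\frac{\alpha^n}{n!}\,\mathbb{E}\,Y_{ij}^{2n}\le\sum_{n\ge 0}(C\alpha)^n=\frac{1}{1-C\alpha}
\]
for any $\alpha<1/C$, and then recombine the finitely many entries by the H\"older step above to obtain the stated bound on $\mathbb{E}\exp\alpha|\int_0^1 b'_x(t,W_t)\,dt|^2$, with $C$ and $\alpha$ depending only on the dimension and not on $b$.
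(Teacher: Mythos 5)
Your strategy is essentially Davie's original one (expand the even moments, write the expectation as an iterated integral against heat kernels, integrate by parts to move every derivative off $b$, and control the resulting singularities), which is precisely the route the paper sets out to avoid. The paper instead observes that $\int_0^1 b'_x(t,W_t)\,dt$ equals the quadratic covariation $[b(\cdot,W_\cdot),W]_1$ and uses the forward--backward integral decomposition together with the time-reversed Brownian motion, so that the whole quantity becomes $I_1+I_2+I_3$, where $I_1,I_3$ are stochastic integrals with integrands bounded by $1$ (hence sub-Gaussian by Dubins--Schwarz and the reflection principle) and $I_2$ is a drift term handled by Jensen's inequality against the Gaussian tail of $W_{1-t}/\sqrt{1-t}$. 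That argument is a few lines and needs no kernel combinatorics. (The reductions to $d=1$ differ but both are fine: you split into entries via the generalized H\"older inequality, the paper conditions on the remaining coordinates of $W$.)

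The difficulty with your write-up as a proof is that its key step is asserted rather than established. You correctly identify that integration by parts makes derivatives pile up on the kernels and that a kernel absorbing two derivatives produces a non-integrable $\tau^{-1}$; your proposed fix is to propagate a H\"older seminorm of the partially assembled factor $\varphi$ and use $\|\partial_i P_\tau\varphi\|_\infty\le c\,\tau^{-(1-\gamma)/2}[\varphi]_{C^\gamma}$. But $[\varphi]_{C^\gamma}$ itself is built from quantities such as $[\partial_i P_\sigma\psi]_{C^\gamma}\lesssim \sigma^{-(1+\gamma)/2}\|\psi\|_\infty$, so the seminorms accumulate their own negative powers of the earlier time increments; whether the resulting sum over all ways the $2n$ derivatives can distribute is bounded by $C^{2n}$ times a product of integrable singularities (so that the Dirichlet integral really yields $C^n n!$ and not worse) is exactly the hard combinatorial estimate. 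You flag it as ``the delicate point'' and leave it unverified. Without closing that recursion --- with explicit control of both the number of terms generated and the exponents they carry --- the moment bound $\mathbb{E}\,Y_{ij}^{2n}\le C^n n!$ does not follow, and everything else in your argument (the H\"older step over entries, summing the exponential series) is routine. So the proposal is a reasonable outline of the original, technically heavy proof, but it has a genuine gap at its center; the paper's time-reversal argument bypasses the issue entirely.
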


An interesting discussion of this inequality and some similar problems can be found in \cite{F}.
The original proof of Davie is quite long and relies on some explicit computations for the Gaussian kernel.
Since our approach to Davie's theorem in the case of a Borel measurable drift also uses this estimate,
below we present a proof which seems to be less technical than the reasoning in \cite{D}.

\begin{proof}
We first prove the desired inequality for $d = 1$.
Let
$$Z_s := b(s, W_s).$$
For the quadratic covariation of the processes $Z$ and $W$
we have the following representations (see \cite{FPS}):
$$
[Z, W]_{1} = \int_{0}^{1}b'_{x}(s, W_s)\,ds
$$
$$
[Z, W]_{1} = \lim \sum (Z_{t_{i + 1}} - Z_{t_{i}})(W_{t_{i + 1}} - W_{t_{i}})
$$
$$
[Z, W]_{1} = \int_{0}^{1}Z_{t}\,d^{*}W_t - \int_{0}^{1}Z_t\,dW_t,
$$
where
$$
\int_{0}^{1}Z_{t}\,d^{*}W_t = \int_{0}^{1}Z_{1 - s}\,d\widetilde{W}_{s}, \ \widetilde{W}_{s} = W_{1 - s}.
$$
The process $\widetilde{W}_{t}$ (the time-reversed Brownian motion) satisfies the integral equality
$$
\widetilde{W}_{t} = \widetilde{W}_{0} + B_t + \int_{0}^{t}\frac{-\widetilde{W}_{s}}{1 - s}\,ds,
$$
where $B$ is another Brownian motion. Then
\begin{multline*}
\int_{0}^{1}b'_{x}(t, W_t)\,dt = \\
= \int_{0}^{1}b(1 - t, W_{1 - t})\,dB_t + \int_{0}^{1}\frac{-W_{1 - t}b(1 - t, W_{1 - t})}{1 - t}\,dt - \int_{0}^{1}b(t, W_t)\,dW_t = \\
= I_1 + I_2 + I_3 \\
\end{multline*}
It is easy to notice that the terms $I_1$ and $I_3$ can be estimated by means of
the Dubins--Schwarz theorem and the well-known formula for the distribution
of the maximum of a Wiener process on the interval $[0, 1]$.
The assumption that $\|b\|_{\infty} \leqslant 1$ implies that there exist constants
$\alpha_1, C_1 > 0$ such that
$$
\mathbb{E}\exp\alpha_{1}\Bigl(I_{1}^{2} + I_{3}^{2}\Bigr) \leqslant C_{1}.
$$
Let us estimate the term $I_2$.
Applying Jensen's inequality we obtain the following estimates:
\begin{multline*}
\mathbb{E}\exp{\frac{1}{16}I_{2}^{2}} = \mathbb{E}\exp{\frac{1}{4}\Biggl(\int_{0}^{1}b(1 - t, W_{1 - t})\frac{W_{1 - t}}{2 - 2t}\, dt\Biggr)^2} \leqslant \\
\leqslant \mathbb{E}\int_{0}^{1}\exp{\Biggl(\frac{1}{4}b^{2}(1 - t, W_{1 - t})\Bigl|\frac{W_{1 - t}}{\sqrt{1 - t}}\Bigr|^{2}\Biggr)}\frac{dt}{2\sqrt{1 - t}} \leqslant \\
\leqslant \mathbb{E}\int_{0}^{1}\exp{\Biggl(\frac{1}{4}\Bigl|\frac{W_{1 - t}}{\sqrt{1 - t}}\Bigr|^{2}\Biggr)}\frac{ds}{2\sqrt{1 - t}} \leqslant C_2 < \infty.
\end{multline*}
Now it is trivial to complete the proof in the case $d = 1$.

Now let $d > 1$. We have
$$
b(t, x) = \bigl(b^{1}(t, x_1, \ldots, x_d), \ldots, b^{d}(t, x_1, \ldots, x_d)\bigr),
$$
$$W_t = \bigl(W^{1}_t, \ldots, W^{d}_{t}\bigr).$$
It is easy to see that in this case it suffices  to prove the inequality
$$
\mathbb{E}\exp{\alpha\Bigl|\int_{0}^{1}b'_{x_1}(t, W^{1}_{t}, \ldots, W^{d}_{t})\, dt\Bigr|^2} \leqslant C
$$
for all functions $b$ with $\|b\|_{\infty} \leqslant 1$.
This estimate follows from the chain of inequalities:
\begin{multline*}
\mathbb{E}\exp{\alpha\Bigl|\int_{0}^{1}b'_{x_1}(t, W^{1}_{t}, \ldots, W^{d}_{t})\, dt\Bigr|^{2}} =
\\
= \mathbb{E}\Bigl[\mathbb{E}\Bigl[\exp{\alpha\Bigl|\int_{0}^{1}b'_{x_1}(t, W^{1}_{t}, \ldots, W^{d}_{t})\, dt\Bigr|^{2}} | W^{2}, \ldots, W^{n}\Bigr]\Bigr]
\leqslant \mathbb{E}C = C,
\end{multline*}
where the one-dimensional case has been used.
\end{proof}

\begin{corollary}\label{cor:refined_estimate}
There exist constants $C, \alpha >0$ such that, for any Borel
measurable mapping $b \in L^{\infty}\bigl([r, u]\times\mathbb{R}^{d}, \mathbb{R}^{d}\bigr)$
with $\|b\|_{\infty} \leqslant 1$,
any Borel measurable functions $h_1, h_2 \in L^{\infty}\bigl([r, u], \mathbb{R}^{d}\bigr)$
and any $\lambda \geqslant 0$, the following inequality holds:
$$
P\Bigl[\bigl|\int_{r}^{u}b(s, W_s + h_1(s)) - b(s, W_s + h_2(s))\, ds\bigr| \geqslant \lambda l^{\frac{1}{2}}\|h_1 - h_2\|_{\infty}\Bigr]
\leqslant C\exp\bigl(-\alpha \lambda^2\bigr),
$$
where $l = u - r$.
\end{corollary}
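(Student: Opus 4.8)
The plan is to reduce the stated tail bound, via a Chernoff estimate, to an exponential-square moment inequality, and then to rerun the time-reversal argument of Proposition~\ref{pr:estimate} in the presence of a bounded weight and a deterministic shift. Write $\delta=\|h_1-h_2\|_\infty$ and $v(s)=h_1(s)-h_2(s)$. Since $P[|X|\ge\lambda l^{1/2}\delta]\le C\exp(-\alpha\lambda^2)$ follows from $\mathbb{E}\exp(\alpha|X|^2/(l\delta^2))\le C$, it suffices to establish the latter with $X=\int_r^u[b(s,W_s+h_1(s))-b(s,W_s+h_2(s))]\,ds$. First I would assume $b$ smooth (and, by a further mollification, take $h_1,h_2$ continuous), deferring the passage to Borel $b$ and measurable $h_i$ to an approximation step at the very end; this is legitimate because the final inequality involves only values of $b$ and the shifts $h_i$, not any derivatives.

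For smooth data the fundamental theorem of calculus gives $X=\int_0^1 G_\theta\,d\theta$ with $G_\theta=\int_r^u \nabla b(s,W_s+h_\theta(s))\,v(s)\,ds$ and $h_\theta=h_2+\theta(h_1-h_2)$. By Jensen's inequality applied to the probability measure $d\theta$ on $[0,1]$ and the convex function $t\mapsto\exp(\alpha t^2/(l\delta^2))$, it is enough to bound $\mathbb{E}\exp(\alpha|G_\theta|^2/(l\delta^2))$ uniformly in $\theta$. Decomposing $v=\sum_k v^k e_k$ and examining one component of $G_\theta$ at a time, everything reduces to the scalar quantities $\int_r^u v^k(s)\,\partial_{x_k}\beta(s,W_s+g(s))\,ds$, where $\beta$ is a component of $b$ (so $\|\beta\|_\infty\le1$), $g=h_\theta$ is a fixed deterministic path, and $|v^k|\le\delta$. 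A Brownian scaling on $[r,u]$ together with conditioning on $\mathcal{F}_r$ turns the interval $[r,u]$ into $[0,1]$ and produces exactly the factor $l^{1/2}$; after this reduction I must prove that $\int_0^1 w(s)\,\partial_{x_k}\beta(s,W_s+g(s))\,ds$ has a uniformly bounded exponential-square moment whenever $\|\beta\|_\infty\le1$ and $\|w\|_\infty\le1$.

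This last estimate is the heart of the matter, and the key observation is that the weight $w$ and the shift $g$ pass harmlessly through the proof of Proposition~\ref{pr:estimate}. Repeating the time-reversal computation verbatim, but carrying $w(s)$ and $g(s)$ inside, one obtains a decomposition $\int_0^1 w\,\partial_{x_k}\beta(\cdot,W+g)=I_1^{w}+I_2^{w}+I_3^{w}$ in which $I_1^{w}$ and $I_3^{w}$ are Itô integrals with integrands bounded by $\|w\beta\|_\infty\le1$, hence controlled by the Dubins--Schwarz theorem and the distribution of the maximum of Brownian motion exactly as before, while $I_2^{w}$ is dominated pointwise by $\int_0^1\frac{|W_{1-s}|}{1-s}\,ds$, so that the same Jensen computation applies. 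The weight only enters through $\|w\beta\|_\infty\le1$, and the deterministic shift $g$ does not affect adaptedness, so the constants are those of Proposition~\ref{pr:estimate}. Reinstating the factor $\delta$ from $|v^k|\le\delta$, summing the finitely many coordinates and components, and undoing the scaling yields $\mathbb{E}\exp(\alpha|X|^2/(l\delta^2))\le C$ for smooth data.

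Finally I would remove the regularity assumptions: approximate the Borel map $b$ by mollifications $b_n$ with $\|b_n\|_\infty\le1$ and the bounded measurable $h_i$ by continuous functions, so that the corresponding random variables $X_n$ converge almost surely to $X$ (the occupation measure of $s\mapsto(s,W_s+h_i(s))$ being absolutely continuous), and pass the uniform bound to the limit by Fatou's lemma before applying Chernoff. The main obstacle is the heart-of-the-matter step: one has to be certain that inserting a merely bounded weight and a time-dependent deterministic shift does not destroy the semimartingale and covariation identities underlying Proposition~\ref{pr:estimate}. This is what forces the two-stage strategy of first proving the inequality for smooth $b$ and regular shifts, where $s\mapsto\beta(s,W_s+g(s))$ is a genuine semimartingale and the backward-integral representation is valid, and only afterwards recovering the general Borel case by approximation.
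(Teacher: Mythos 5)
Your proposal is correct and follows essentially the same route as the paper: reduce to $[0,1]$ by Brownian scaling, reduce to smooth data, write the difference as $\int_0^1 G_\theta\,d\theta$ by the fundamental theorem of calculus, apply Jensen in $\theta$, invoke the exponential-square moment bound of Proposition~\ref{pr:estimate} for the shifted and weighted function, and finish with Chebyshev. The only difference is one of presentation: where the paper simply applies Proposition~\ref{pr:estimate} to the transformed function $\widehat{b}(s,x)=b(s,x+h_2(s)+\theta(h_1(s)-h_2(s)))\frac{h_1(s)-h_2(s)}{\|h_1-h_2\|_\infty}$, you re-run the time-reversal argument to check explicitly that the bounded weight and deterministic shift pass through it --- a point the paper leaves implicit and which your version actually treats more carefully.
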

\begin{proof}
Taking into account the scale invariance of the Brownian motion it is easy to notice
that we can assume that $r = 0$ and $u = 1$.
One can easily show that it is also sufficient to prove the desired estimate
just for smooth functions with compact supports.
In this case we have
\begin{multline*}
\mathbb{E}\exp{\alpha\Bigl|\int_{0}^{1}\frac{b(s, W_s + h_1(s)) - b(s, W_s + h_2(s))}{\|h_1 - h_2\|_{\infty}}\, ds\Bigr|^2} = \\
= \mathbb{E}\exp{\alpha\Bigl|\int_{0}^{1}\int_{0}^{1}b'_{x}\bigl(s, W_s + h_2(s) + \theta(h_1(s) - h_2(s))\bigr)\frac{h_1(s) - h_2(s)}{\|h_1 - h_2\|_{\infty}}\, d\theta\, ds\Bigr|^2}
\leqslant \\
\leqslant \int_{0}^{1}\mathbb{E}\exp{\alpha\Bigl|\int_{0}^{1}b'_{x}\bigl(s, W_s + h_2(s) + \theta(h_1(s) - h_2(s))\bigr)\frac{h_1(s) - h_2(s)}{\|h_1 - h_2\|_{\infty}}\, ds\Bigr|^2}\, d\theta \leqslant \\
\leqslant \int_{0}^{1}C\, d\theta = C.
\end{multline*}
In the last inequality for each $\theta$ we have applied Proposition \ref{pr:estimate} to the function
$$
\widehat{b}(s, x) = b\bigl(s, x + h_2(s) + \theta(h_1(s) - h_2(s))\bigr)\frac{h_1(s) - h_2(s)}{\|h_1 - h_2\|_{\infty}}.
$$
Now the necessary estimate follows by the Chebyshev inequality.
\end{proof}

The next proposition will play the crucial role in the proof of the main results.

\begin{proposition}\label{pr:flow}
Let
$$
b \in L^{q}\bigl([0, T], L^{p}(\mathbb{R}^{d})\bigr), \ \frac{d}{p} + \frac{2}{q} < 1.
$$
Then, there exists a H\"older flow of solutions to the equation \ref{eq:main}. More precisely,
for any filtered probability space
$(\Omega, \mathcal{F}, \{\mathcal{F}_{t}\}, P)$ and a Brownian motion $W$, there exists a mapping
$(s, t, x, \omega) \mapsto \varphi_{s, t}(x)(\omega)$ with values in $\mathbb{R}^{d}$, defined for
$0 \leqslant s \leqslant t \leqslant T, \ x \in \mathbb{R}^{d}, \ \omega \in \Omega$,
such that for each $s \in [0, T]$ the following conditions hold:
\begin{enumerate}[{1.}]
\item for any $x \in \mathbb{R}^{d}$ the process $X_{s, t}^{x} = \varphi_{s, t}(x)$ is a continuous $\mathcal{F}_{s,t}$ adapted
solution to the equation \ref{eq:main},
\item $P$-almost surely the mapping $x \mapsto \varphi_{s, t}(x)$ is a homeomorphism,
\item $P$-almost surely for all $x \in \mathbb{R}^{d}$ and $0 \leqslant s \leqslant u \leqslant t \leqslant 1$
$$\varphi_{s, t}(x) = \varphi_{u, t}(\varphi_{s, u}(x)),$$
\item $P$-almost surely for each $\alpha \in (0, 1)$ and each positive $N \in \mathbb{R}$
one can find $C(\alpha, N, \omega) < \infty$ such that for all
$x, y \in \mathbb{R}^{d}: |x|, |y| < N$ and
$s, t \in [0, T], \ s \leqslant t$
$$|\varphi_{s, t}(x) - \varphi_{s, t}(y)| \leqslant C(\alpha, T, N, \omega)|x - y|^{\alpha}.$$
\end{enumerate}
\end{proposition}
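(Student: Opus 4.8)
The plan is to reduce the singular drift to a regular one via the Zvonkin--It\^o--Tanaka transformation and then to build the flow for the transformed, Lipschitz equation by classical means. First I would invoke the Krylov--R\"ockner theory: under the subcritical condition $\frac{d}{p}+\frac{2}{q}<1$ the equation \ref{eq:main} admits, for every starting time and point, a unique strong solution, so that the objects $X_{s,t}^{x}$ are at least well defined as random variables and the flow and cocycle identities will be forced by pathwise uniqueness.

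The core step is the auxiliary backward parabolic system
\begin{equation*}
\partial_{t}u+\tfrac{1}{2}\Delta u+b\cdot\nabla u-\lambda u=-b,\qquad u(T,\cdot)=0,
\end{equation*}
solved componentwise. By Krylov's $L^{q}$--$L^{p}$ parabolic estimates the solution lies in $L^{q}\bigl([0,T];W^{2,p}(\mathbb{R}^{d})\bigr)\cap W^{1,q}\bigl([0,T];L^{p}(\mathbb{R}^{d})\bigr)$, and --- this is where the strict inequality $\frac{d}{p}+\frac{2}{q}<1$ is used --- the parabolic Sobolev embedding places $u$ and $\nabla u$ among functions that are bounded and H\"older continuous in $x$ uniformly in $t$, with $\|u\|_{\infty}+\|\nabla u\|_{\infty}\to 0$ as $\lambda\to\infty$. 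Fixing $\lambda$ so large that $\|\nabla u\|_{\infty}\leqslant\tfrac{1}{2}$, the map $\Phi_{t}=\mathrm{Id}+u(t,\cdot)$ becomes, for each $t$, a diffeomorphism of $\mathbb{R}^{d}$ whose Jacobian is uniformly comparable to the identity; both $\Phi_{t}$ and $\Phi_{t}^{-1}$ are then Lipschitz with H\"older derivatives.

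Next I would apply It\^o's formula to $Y_{t}=\Phi_{t}(X_{t})$ --- justified by mollifying $b$ and passing to the limit using the $W^{2,p}_{q}$ regularity --- so that the singular term $b\cdot\nabla u$ is exactly cancelled by the corresponding part of the PDE, and one obtains
\begin{equation*}
dY_{t}=\tilde b(t,Y_{t})\,dt+\tilde\sigma(t,Y_{t})\,dW_{t},\qquad \tilde b=\lambda u\circ\Phi^{-1},\ \ \tilde\sigma=I+\nabla u\circ\Phi^{-1},
\end{equation*}
with $\tilde b,\tilde\sigma$ globally Lipschitz in $y$. For this regular equation the classical two-parameter theory yields a flow $Y_{s,t}^{y}$: strong adapted solutions for each $y$, the cocycle identity from pathwise uniqueness, and the moment bounds $\mathbb{E}\sup_{t}|Y_{s,t}^{y}-Y_{s,t}^{y'}|^{m}\leqslant C_{m}|y-y'|^{m}$ together with analogous estimates in $s$ and $t$. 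By Kolmogorov's continuity criterion these bounds produce a version that is jointly continuous in $(s,t,y)$ and, for every $\alpha\in(0,1)$ and every $N$, locally $\alpha$-H\"older in $y$ with a finite random constant; invertibility of the flow then gives the homeomorphism property.

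Finally I would transfer everything back by setting $\varphi_{s,t}(x):=\Phi_{t}^{-1}\bigl(Y_{s,t}^{\Phi_{s}(x)}\bigr)$. Because $\Phi$ and $\Phi^{-1}$ are locally bi-Lipschitz, the four asserted properties --- being an adapted solution of \ref{eq:main}, the homeomorphism property, the cocycle law, and local H\"older continuity of every order $\alpha<1$ --- all descend from $Y$ to $\varphi$. The main obstacle is the regularity analysis of the transport PDE: extracting boundedness and H\"older continuity of $\nabla u$, rather than mere Sobolev regularity, is exactly what forces the strict subcritical inequality and is the step on which the whole construction rests; a secondary technical point is the rigorous justification of It\^o's formula for the non-smooth field $u$, which requires a careful approximation argument.
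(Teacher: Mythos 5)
Your overall strategy coincides with the paper's: solve the backward parabolic system, use $\Phi_t=\mathrm{Id}+u(t,\cdot)$ as a Zvonkin-type conjugation, build the flow for the transformed equation, and transfer back. However, there is a genuine gap at the central analytic step. You assert that $\tilde\sigma=I+\nabla u\circ\Phi^{-1}$ is globally Lipschitz in $y$ and then invoke the classical two-parameter theory for Lipschitz SDEs to obtain $\mathbb{E}\sup_t|Y^y_{s,t}-Y^{y'}_{s,t}|^m\leqslant C_m|y-y'|^m$. But the parabolic regularity only gives $\nabla u$ bounded and H\"older continuous in $x$, with $\nabla^2 u$ merely in $L^q([0,T];L^p(\mathbb{R}^d))$; the diffusion coefficient $\tilde\sigma$ is therefore \emph{not} Lipschitz (only the drift $\tilde b=\lambda u\circ\Phi^{-1}$ is), and the standard Gronwall--BDG argument does not close. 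This is exactly the point the paper has to work around: it introduces the auxiliary increasing process
$$
A_t=\int_0^t\frac{\|\tilde\sigma(s,Y^y_s)-\tilde\sigma(s,Y^x_s)\|^2}{|Y^y_s-Y^x_s|^2}\,I_{\{Y^y_s\neq Y^x_s\}}\,ds,
$$
proves $\mathbb{E}\bigl[e^{kA_T}\bigr]<\infty$ for every $k$ (this is where the Sobolev regularity of $\tilde\sigma$ and Krylov-type estimates enter), and applies It\^o's formula to $e^{-CA_t}|Y^y_t-Y^x_t|^a$ to absorb the non-Lipschitz diffusion term into $dA_t$.

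A second, related inaccuracy: the moment bound actually obtainable is
$$
\mathbb{E}\sup_{t\in[0,T]}|Y^x_t-Y^y_t|^a\leqslant C(a,T)\bigl(|x-y|^a+|x-y|^{a-1}\bigr),
$$
with the extra term $|x-y|^{a-1}$ coming from the quadratic-variation part of the martingale estimate. Via Kolmogorov's criterion this yields local $\alpha$-H\"older continuity for every $\alpha<1$, which is precisely what the proposition claims; your stated bound $C_m|y-y'|^m$ would instead give a locally Lipschitz flow, a stronger conclusion that the argument does not deliver. The remaining ingredients of your proposal (choice of $\lambda$, justification of It\^o's formula by mollification, uniqueness via Krylov--R\"ockner, and the transfer of all four properties back through the bi-Lipschitz maps $\Phi_t$, $\Phi_t^{-1}$) agree with the paper.
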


The existence of a flow possessing properties 1--3 is proved in \cite{FF} (see Theorem 1.2).
Instead of property 4 the authors of \cite{FF} prove (see their Lemma 5.11)
a slightly weaker assertion that almost surely for any fixed $s, t \in [0, 1], \ s \leqslant t$
the mapping $\varphi_{s, t}$ is H\"older continuous.
For the sake of completeness,   we present below
a sketch of the proof of Proposition \ref{pr:flow} with necessary references
to \cite{FF}, \cite{FF2} and the key details of the proof of property~4.

\vskip .1in
{\bf Step 1.} (See \cite{FF}, Theorem 3.3, Lemma 3.4, and Lemma 3.5.)
Let
$$
L_{p}^{q}(T) = L^{q}\bigl([0, T], L^{p}(\mathbb{R}^{d})\bigr)
$$
$$
\mathbb{H}_{\alpha, p}^{q}(T) = L^{q}\bigl([0, T], W^{\alpha, p}(\mathbb{R}^{d})\bigr),
\ \mathbb{H}_{p}^{\beta, q}(T) = W^{\beta, q}\bigl([0, T], L^{p}(\mathbb{R}^{d})\bigr)
$$
$$
H_{\alpha, p}^{q}(T) = \mathbb{H}_{\alpha, p}^{q}(T) \cap \mathbb{H}_{p}^{1, q}(T)
$$
Let $U:[0, T]\times\mathbb{R}^{d}\rightarrow\mathbb{R}^{d}$ be a solution to the equation
\begin{equation}
\label{eq:pde}
\left\{
\begin{aligned}
&\frac{\partial U}{\partial t} + \frac{1}{2}\Delta U + b \cdot \nabla U = \lambda U - b\\
&U(T, x) = 0
\end{aligned}
\right.
\end{equation}
for sufficiently large positive $\lambda$ such that
$$
\|U\|_{H_{2, p}^{q}(T)} = \|D_{t}U\|_{L_{p}^{q}} + \|U\|_{H_{2, p}^{q}(T)}
\leqslant C(d, T, p, q, \lambda)\|b\|_{L_{p}^{q}(T)},
$$
$$
\sup\limits_{t \in [0, T]}\|\nabla U\|_{C_{b}(\mathbb{R}^{d})} \leqslant \frac{1}{2}.
$$
Then the family of mappings $\psi_{t}:\mathbb{R}^{d}\rightarrow\mathbb{R}^{d}$ defined by the formula
$$
\psi_{t}(x) = x + U(t, x)
$$
possesses the following properties:
\begin{enumerate}[{1.}]
\item for each $t \in [0, T]$ the mapping $\psi_{t},\ \psi_{t}^{-1}$ is a
 $C^{1}$-diffeomorphism of $\mathbb{R}^{d}$,
\item uniformly in $t \in [0, T]$ the mappings $\psi_{t},\ \psi_{t}^{-1}$,
have globally bounded H\"older-continuous derivatives with respect to the space variable,
\item the mapping $(t, x) \mapsto \psi_{t}(x)$  belongs locally to the class $H_{2, p}^{q}(T)$.
\end{enumerate}

{\bf Step 2.} (See \cite{FF}, Proposition 4.3.)
The next step is transforming the original equation \ref{eq:main}
(considered as a stochastic equation with the identity diffusion matrix and a Borel measurable drift)
into an equation with more regular coefficients by means of the family of
the homeomorphisms constructed at the previous step.
Let us apply It\^o's formula to the process $X_t$ and the function $U$ (see \cite{FF}, p.~4):
\begin{multline*}
dU(t, X_t) = \frac{\partial U}{\partial t}(t, X_t)\,dt +
\nabla U(t, X_t)\bigl(b(t, X_t)\,dt + dW_t\bigr) +
\frac{1}{2}\Delta U(t, X_t)\,dt=\\
=\lambda U(t, X_t) - b(t, X_t)dt + \nabla U(t, X_t)\,dW_t\\
\end{multline*}
Then the process
$$
Y_t := \psi_t(t, X_t) = X_t + U(t, X_t)
$$
has the stochastic differential
\begin{multline*}
dY_t =
\lambda U(t, \psi_{t}^{-1}(Y_t))\,dt + \bigl[I + \nabla U(t, \psi_{t}^{-1}(Y_t))\bigr]\,dW_t=
\widetilde{b}(t, Y_t)\,dt + \widetilde{\sigma}(t, Y_t)\,dW_t,\\
\widetilde{b}(t, y) = \lambda U(t, \psi_{t}^{-1}(y)), \
\widetilde{\sigma}(t, y) = I + \nabla U(t, \psi_{t}^{-1}(Y_t))\\
\end{multline*}

{\bf Step 3.} (See \cite{FF}, Proposition 5.2, \cite{FF2}, p.~13--14.)
Taking into account the aforementioned properties of the mappings $\psi_t$ it is not difficult to see
that it  suffices  to prove the existence of a uniformly H\"older-continuous flow
for the transformed equation.
Below we prove only the uniform H\"older-continuity of the desired flow since all other details
(e.g., the proof of its existence) can be found in \cite{FF}.

We have
\begin{equation}\label{eq:transformed_main}
dY_t = \widetilde{b}(t, Y_t)\,dt + \widetilde{\sigma}(t, Y_t)\,dW_t.
\end{equation}
Let us show that for each $a \geqslant 2$ there exists  a constant $C(a, T)$ such that for any $x, y \in \mathbb{R}^{d}$
the following estimate holds:
\begin{equation}
\mathbb{E}\sup\limits_{t \in [0, T]}|Y_{t}^{x} - Y_{t}^{y}|^{a}
\leqslant C(a, T)\bigl(|x - y|^{a} + |x - y|^{a - 1}\bigr),
\end{equation}
In this case the existence of a uniformly H\"older-continuous flow will
 follow from the well-known Kolmogorov continuity theorem.
Following \cite{FF}, \cite{FF2}, let us define an auxiliary process
$$
A_t := \int_{0}^{t}\frac{\|\widetilde{\sigma}(s, Y_{s}^{y}) - \widetilde{\sigma}(s, Y_{s}^{x})\|^{2}}{|Y_{s}^{y} - Y_{s}^{x}|^2}I_{\{Y_{s}^{y} \neq Y_{s}^{x}\}}\,ds
$$
Then (see \cite{FF}, Lemma 4.5) for each $k \in \mathbb{R}$ we have
\begin{equation}\label{eq:exp_estimate}
\mathbb{E}\bigl[e^{k A_T}\bigr] < \infty
\end{equation}
(in the proof of this inequality the Sobolev regularity of $\widetilde{\sigma}$ plays the crucial role).

Let
$$
Z_t := Y_{t}^{y} - Y_{t}^{x}.
$$

Applying It\^o's formula to the process $Z_t$ and the function $f: x \mapsto |x|^{a},$
where $a \geqslant 2,$ we obtain
\begin{multline*}
\frac{1}{a}d|Z_{t}|^{a} = \Bigl<\bigl(\widetilde{b}(t, Y_{t}^{y}) - \widetilde{b}(t, Y_{t}^{x})\bigr)\,dt, Z_{t}^{a - 1}\Bigr> + \\
+ \Bigl<\bigl(\widetilde{\sigma}(t, Y_{t}^{y}) - \widetilde{\sigma}(t, Y_{t}^{x})\bigr)dW_t, Z_{t}^{a - 1}\Bigr> + \\
+ \frac{1}{2}Tr\Bigl([\sigma(t, Y_{t}^{y}) - \sigma(t, Y_{t}^{x})][\sigma(t, Y_{t}^{y}) - \sigma(t, Y_{t}^{x})]^{t} D^{2}f(Z_t)\Bigr)\,dt,\\
\bigl[D^{2}f(Z_t)\bigr]_{i, j} =
\delta_{i,j}|Z_t|^{a - 2} + (a - 2)Z_{t}^{i}Z_{t}^{j}|Z_t|^{a - 4}.\\
\end{multline*}
Using the Lipschitz continuity of $\widetilde{b}$ and the definition of the process $A_t$
we obtain the inequality
$$
d|Z_{t}|^{a} \leqslant C|Z_t|^{a}\,dt + C|Z_t|^{a}\,dA_{t} +
\Bigl<\bigl(\widetilde{\sigma}(t, Y_{t}^{y}) - \widetilde{\sigma}(t, Y_{t}^{x})\bigr)dW_t, Z_{t}^{a - 1}\Bigr>
$$
Let
$$
M_t := \int_{0}^{t}\Bigl<\bigl(\widetilde{\sigma}(t, Y_{t}^{y}) - \widetilde{\sigma}(t, Y_{t}^{x})\bigr)dW_t, Z_{t}^{a - 1}\Bigr>.
$$
Since the coefficient $\widetilde{\sigma}$ is bounded and all moments of
the random variable $|Z_t|$ are finite (see \cite{FF}, Proposition 2.7),
the process $M_t$ is a square-integrable continuous martingale. Then we have
\begin{multline*}
de^{-CA_t}|Z_t|^{a} = -Ce^{-CA_{t}}|Z_{t}|^{a}dA_t + e^{-CA_t}d|Z_t|^{a}\leqslant\\
\leqslant-Ce^{-CA_{t}}|Z_{t}|^{a}dA_t + e^{-CA_t}C|Z_t|^{a}\,dt + e^{-CA_t}C|Z_t|^{a}\,dA_{t} + e^{-CA_t}\,dM_t =\\
=Ce^{-CA_t}|Z_t|^{a}\,dt + e^{-CA_t}\,dM_t\\
\end{multline*}
Consequently, the following estimate holds:
$$
\mathbb{E}e^{-CA_t}|Z_t|^{a} \leqslant |x - y|^{a} + C\int_{0}^{t}\mathbb{E}e^{-CA_t}|Z_t|^{a}\,dt.
$$
Applying Gronwall's inequality we obtain the estimate
$$
\mathbb{E}e^{-CA_t}|Z_t|^{a} \leqslant |x - y|^{a}e^{CT}.
$$
Taking into account H\"older's inequality and the estimate \ref{eq:exp_estimate} we have
$$
\mathbb{E}|Z_t|^{a} = \mathbb{E}e^{CA_t}e^{-CA_t}|Z_t|^a \leqslant
\Bigl[\mathbb{E}e^{2CA_t}\Bigr]^{\frac{1}{2}}\Bigl[\mathbb{E}e^{-2CA_t}|Z_t|^{2a}\Bigr]^{\frac{1}{2}} \leqslant C(a, T)|x - y|^{a}
$$
The next chain of inequalities easily follows from Doob's martingale inequality and
the boundedness of $\widetilde{\sigma}$:
\begin{multline*}
\mathbb{E}\sup\limits_{t \in [0, T]}e^{-2CA_t}|Z_t|^{2a} \leqslant \\
\leqslant 4|Z_0|^{2a} +
4\mathbb{E}\sup\limits_{t \in [0, T]}\Bigl|\int_{0}^{t}Ce^{-CA_s}|Z_s|^{a}\,ds\Bigr|^{2} +
4\mathbb{E}\sup_{t \in [0, T]}\Bigl|\int_{0}^{t}e^{-CA_s}\,dM_s\Bigr|^2 \leqslant\\
\leqslant 4|x - y|^{2a} +
4C^{2}T\mathbb{E}\int_{0}^{T}e^{-2CA_t}|Z_t|^{2a}\,dt + \\
+ 16\mathbb{E}\int_{0}^{T}e^{-2CA_t}\|\widetilde{\sigma}(t, Y_{t}^{y}) - \widetilde{\sigma}(t, Y_{t}^{x})\|^{2}|Z_t|^{2a - 2}\,dt \leqslant\\
\leqslant K|x - y|^{2a} + K\mathbb{E}\int_{0}^{t}e^{-2CA_s}|Z_s|^{2a}\,ds + K\mathbb{E}\int_{0}^{t}e^{-2CA_s}|Z_s|^{2a - 2}\,ds \leqslant \\
\leqslant K(a, T)\bigl(|x - y|^{2a} + |x - y|^{2a - 2}\bigr).\\
\end{multline*}
Therefore,
\begin{multline*}
\mathbb{E}\sup\limits_{t \in [0, T]}|Z_t|^{a} \leqslant \mathbb{E}e^{CA_T}\sup\limits_{t \in [0, T]}e^{-CA_t}|Z_t|^{a}\leqslant\\
\leqslant \Bigl[\mathbb{E}e^{2CA_T}\Bigr]^{\frac{1}{2}} \Bigl[\mathbb{E}\sup\limits_{t \in [0, T]}e^{-2CA_t}|Z_t|^{2a}\Bigr]^{\frac{1}{2}}
\leqslant K'(a, T)\bigl(|x - y|^{a} + |x - y|^{a - 1}\bigr)\\
\end{multline*}
It is now easy to complete the proof.

\section{\sc Main results}
To illustrate the main idea let us prove Davie's theorem for
some (possibly unbounded) drift coefficients $b$ possessing H\"older's continuity
with respect to the space variable. It is worth noting that the
reasoning from \cite{D} can not be directly applied in this case, since they
essentially use the global boundedness of the drift.

\begin{theorem}\label{th:holder_case}
Assume that the coefficient $b$ satisfies the following conditions:
\begin{enumerate}[{1.}]
\item there exists $M_{1} \in L^{q_1}\bigl([0, T], \mathbb{R}\bigr)$ such that
$$|b(t, x)| \leqslant M_{1}(t), \ \ t\in[0, T],\ x\in\mathbb{R}^{d}$$
\item there exists $M_{2} \in L^{q_2}\bigl([0, T], \mathbb{R}\bigr)$ and $\beta > 0$ such that
$$
|b(t, x) - b(t, y)| \leqslant M_2(t)|x - y|^{\beta}, \ \ t\in[0, T],\ x,y\in\mathbb{R}^{d}
$$
\item one has
$$
q_1 \geqslant q_2 > 2, \ \beta > 0, \ \frac{\beta}{p_1} + \frac{1}{p_2} > 1, \ \text{where} \
\frac{1}{p_1} + \frac{1}{q_1} = 1, \ \frac{1}{p_2} + \frac{1}{q_2} = 1.
$$
\end{enumerate}
Then there exist a set $\Omega'$ with $P(\Omega') = 1$ such that for each $\omega \in \Omega'$
the equation \ref{eq:main} has exactly one solution.
\end{theorem}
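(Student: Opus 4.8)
The plan is to derive uniqueness from the existence of the uniformly H\"older flow $\varphi_{s,t}$ of Proposition \ref{pr:flow} by means of a stochastic version of van Kampen's argument. First I observe that conditions 1 and 3 give $b\in L^{q_1}\bigl([0,T],L^{\infty}(\mathbb{R}^d)\bigr)$ with $q_1>2$, which is exactly the case $p=\infty$ of the condition $\tfrac{d}{p}+\tfrac{2}{q}<1$; for a finite exponent one first truncates $b$ in space (this does not affect solutions staying in a fixed ball) and takes $p$ large enough that $\tfrac{d}{p}+\tfrac{2}{q_1}<1$, so that Proposition \ref{pr:flow} applies. This produces, on a set $\Omega'$ of full measure, a flow satisfying properties 1--4; in particular $t\mapsto\varphi_{0,t}(x)$ is a solution of \ref{eq:main}, which settles existence. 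Since every estimate below is carried out pathwise on $\Omega'$, it suffices to show that for each $\omega\in\Omega'$ any continuous solution $X$ of the integral equation coincides with $\varphi_{0,t}(x)$, which is precisely the asserted path-by-path uniqueness.

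Fix $\omega\in\Omega'$ and a solution $X$, and put $R=\sup_{t\in[0,T]}|X_t|<\infty$. For the uniform partition $t_i=iT/n$ set
\[
G_i:=\varphi_{t_i,T}(X_{t_i}),\qquad i=0,\dots,n,
\]
so that $G_0=\varphi_{0,T}(x)$, $G_n=X_T$, and $X_T-\varphi_{0,T}(x)=\sum_{i=0}^{n-1}(G_{i+1}-G_i)$. The cocycle property (property 3) gives $G_i=\varphi_{t_{i+1},T}\bigl(\varphi_{t_i,t_{i+1}}(X_{t_i})\bigr)$, whence by the \emph{uniform} H\"older estimate of property 4, valid with a single constant $C(\alpha,T,N,\omega)$ for all $s\leqslant t$ (taking $N=R+1$),
\[
|G_{i+1}-G_i|\leqslant C(\alpha,T,N,\omega)\,|\Delta_i|^{\alpha},\qquad \Delta_i:=X_{t_{i+1}}-\varphi_{t_i,t_{i+1}}(X_{t_i}).
\]

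It remains to estimate $\Delta_i$. Both $X$ and $r\mapsto\varphi_{t_i,r}(X_{t_i})$ solve \ref{eq:main} on $[t_i,t_{i+1}]$ from the common value $X_{t_i}$, so the Brownian increments cancel and $\Delta_i=\int_{t_i}^{t_{i+1}}\bigl[b(r,X_r)-b(r,\varphi_{t_i,r}(X_{t_i}))\bigr]\,dr$. Using $|b|\leqslant M_1$, $M_1\in L^{q_1}$ and H\"older's inequality one first bounds the in-between difference, $\sup_{r\in[t_i,t_{i+1}]}|X_r-\varphi_{t_i,r}(X_{t_i})|\leqslant 2\|M_1\|_{q_1}(\Delta t)^{1/p_1}$ with $\Delta t=T/n$; then the spatial H\"older bound (condition 2) together with $M_2\in L^{q_2}$ gives
\[
|\Delta_i|\leqslant\int_{t_i}^{t_{i+1}}M_2(r)\,|X_r-\varphi_{t_i,r}(X_{t_i})|^{\beta}\,dr\leqslant C'(\Delta t)^{\gamma},\qquad \gamma=\frac{\beta}{p_1}+\frac{1}{p_2}>1,
\]
where $\gamma>1$ is condition 3. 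Summing, $|X_T-\varphi_{0,T}(x)|\leqslant C(\alpha,T,N,\omega)(C')^{\alpha}\,n\,(T/n)^{\alpha\gamma}=C''(\omega)\,n^{1-\alpha\gamma}$. Since $\gamma>1$, property 4 lets us fix $\alpha\in(1/\gamma,1)$, so that $\alpha\gamma>1$ and the right-hand side tends to $0$ as $n\to\infty$; hence $X_T=\varphi_{0,T}(x)$. Replacing $T$ by an arbitrary $t$ and invoking continuity yields $X_t=\varphi_{0,t}(x)$ for all $t$, i.e. uniqueness.

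The main obstacle is the simultaneous handling of the two H\"older exponents: the per-step drift increment is only of size $(\Delta t)^{\gamma}$, and part of this gain is consumed by the exponent $\alpha$ coming from the flow. The argument closes precisely because condition 3 forces $\gamma>1$ while property 4 allows $\alpha$ arbitrarily close to $1$, so $\alpha\gamma>1$ and the $n$ terms of size $n^{-\alpha\gamma}$ still sum to $o(1)$. This is also where the strengthening of \cite{FF} to the \emph{uniform}-in-$(s,t)$ H\"older bound is indispensable: a H\"older constant depending on each pair $(t_i,T)$ could not be summed. Note that the present case uses only the deterministic regularity of $b$ and the pathwise regularity of the flow, so Corollary \ref{cor:refined_estimate} is not needed here; it enters only in the Borel case. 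The remaining points---the passage to $p=\infty$ by spatial truncation and the choice of $N$ ensuring $X_{t_{i+1}}$ and $\varphi_{t_i,t_{i+1}}(X_{t_i})$ lie in the ball where property 4 applies---are routine.
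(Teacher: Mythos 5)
Your proposal is correct and follows essentially the same route as the paper: truncate $b$ in space, invoke Proposition \ref{pr:flow} with $q=q_1$ and $p$ large, and compare the solution with the flow via the composition/H\"older estimate, using first the $M_1$ bound to get the increment of order $(\Delta t)^{1/p_1}$ and then the $M_2$ bound to upgrade it to $(\Delta t)^{\gamma}$ with $\gamma=\beta/p_1+1/p_2>1$, finally choosing $\alpha$ with $\alpha\gamma>1$. The paper packages the telescoping sum as the statement that $f(s)=X(s,t,Y_s,W)-X(0,t,x,W)$ is H\"older of exponent $1+\delta$ and hence constant, which is the same argument you wrote out explicitly over the uniform partition.
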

\begin{proof}
Let $Y_t$ be a solution to the equation \ref{eq:main} for a fixed Brownian trajectory $W$.
Then the following estimate holds:
$$
\max\limits_{t \in [0, T]}|Y_t|
\leqslant |x| + \max_{t\in [0, T]}|W_t| + T^{\frac{1}{p_1}}\|M_1\|_{L^{q_1}[0, T]} =: M(x, W),
$$
so without loss of generality we can assume that $b(t, x) = b(t, x)I_{\{|x| < N\}}$ for some $N > 0$.
Then Proposition \ref{pr:flow} (it is clear that one can take $q_1$ for $q$
and any sufficiently large positive number for $p$) yields
that $P$-almost surely the equation \ref{eq:main} has a H\"older-continuous flow
of solutions which will be denoted by
$X(s, t, x, W), \ s \leqslant t, \ x \in \mathbb{R}^{d}.$

Now let us prove that, for each trajectory $W$ such that there
exists the aforementioned H\"older-continuous flow, the equation
\ref{eq:main} has exactly one solution.
Let us fix $t \in [0, T]$ and define an auxiliary function $f$ by the formula
$$
f(s) = X(s, t, Y_s, W) - X(0, t, x, W), \ s \in [0, t].
$$
From the definition of $f$ and the H\"older-continuity of the flow $X(s, t, x, W)$ we obtain that for all
$u, r: 0 \leqslant u \leqslant r \leqslant t$ one has
\begin{multline*}
|f(r) - f(u)| = |X(r, t, Y_r, W) - X(u, t, Y_u, W)| = \\
= |X(r, t, Y_r, W) - X(r, t, X(u, r, Y_u, W), W)| \leqslant \\
\leqslant C(\alpha, T, M(x, W), \omega)|Y_r - X(u, r, Y_u, W)|^{\alpha}. \\
\end{multline*}
Let us estimate $|Y_r - X(u, r, Y_u, W)|$. It is clear that we have the following trivial bound:
\begin{multline*}
|Y_r -  X(u, r, Y_u, W)| \leqslant \int_{u}^{r}|b(s, Y_s) - b(s, X(u, s, Y_u, W))|\,ds \leqslant\\
\leqslant 2\int_{u}^{r}M_{1}(s)\,ds \leqslant 2\|M_1\|_{L^{q_1}[0, T]}|r - u|^{\frac{1}{p_1}}\\
\end{multline*}
The previous estimate can be improved if we take into account the H\"older-continuity of the coefficient $b$:
\begin{multline*}
|Y_r -  X(u, r, Y_u, W)| \leqslant \int_{u}^{r}|b(s, Y_s) - b(s, X(u, s, Y_u, W))|\,ds \leqslant\\
\leqslant \int_{u}^{r}M_{2}(s)|Y_s - X(u, s, Y_u, W)|^{\beta}\,ds
\leqslant K'\int_{u}^{r}M_{2}(s)|r - u|^{\frac{\beta}{p_1}}\,ds \leqslant\\
\leqslant K'\|M_{2}\|_{L^{q_2}[0, T]}|r - u|^{\frac{\beta}{p_1} + \frac{1}{p_2}}.\\
\end{multline*}
Let us pick $\alpha \in (0, 1)$ such that $\frac{\alpha\beta}{p_1} + \frac{\alpha}{p_2} = 1 + \delta, \ \delta > 0$.
Then we have
$$
|f(r) - f(u)| \leqslant C(\alpha, T, M(x, W), \omega)|r - u|^{1 + \delta}.
$$
Consequently, $f \equiv 0$ (here we have also used the fact that $f(0) = 0$,
 which is clear from the definition of $f$).
Finally, $Y_t = X(0, t, x, W)$ and we obtain the desired assertion, since $t \in [0, T]$ was arbitrary.
\end{proof}

Now we show how to prove the original  result of Davie (his Theorem \ref{th:borel_case})
in the case where $b$ is just Borel measurable.
Similarly to the proof of Theorem \ref{th:holder_case}, it is readily seen
that without loss of generality we can assume that
$b(t, x) = b(t, x)I_{\{|x| < N\}}$ and $\|b\|_{\infty} \leqslant 1$.
In this case for each $\alpha \in (0, 1)$ the equation \ref{eq:main} $P$-almost surely
possesses a H\"older-continuous flow
of solutions that will be denoted by $X(s, t, x, W)$.
The main aim of the reasoning below is to find a substitute
for the H\"older condition on the coefficient $b$ that would allow us
to repeat  the proof of Theorem \ref{th:holder_case} with minor changes.

Below we will need the following set of functions:
\begin{multline*}
Lip_{N}\bigl([r, u], \mathbb{R}^{d}\bigr) := \\
:= \bigl\{h \in C\bigl([r, u], \mathbb{R}^{d}\bigr) \ \mid \ |h(t) - h(s)| \leqslant |t - s| \ s,t \in [r, u], \ \max\limits_{s \in [r, u]}|h(s)| \leqslant N \bigr\}
\end{multline*}
with the uniform metric $\varrho(h_1, h_2) = \|h_1 - h_2\|_{\infty}$.
\begin{lemma}\label{le:entropy}
There exist constants $C, \gamma > 0$ such that for all $N, \varepsilon > 0$
 the set $Lip_{N}\bigl([r, u], \mathbb{R}^{d}\bigr)$
contains an $\varepsilon$-net $\mathcal{N}_{\varepsilon}$ with no more than
$$C\Bigl(\frac{N}{\varepsilon}\Bigr)^{d}\exp\Bigl(\gamma\frac{u - r}{\varepsilon}\Bigr)$$
elements.
\end{lemma}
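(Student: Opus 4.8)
The plan is to discretize both the time interval and the spatial range and then to exploit the Lipschitz constraint, which forces consecutive discretized values to be mutually close and thereby limits the per-step branching to a bounded number of possibilities. Write $l = u - r$ and introduce two scales, a time step $\delta = c_{1}\varepsilon$ and a lattice spacing $\eta = c_{2}\varepsilon$, where the constants $c_{1}, c_{2} > 0$ (depending only on $d$) are calibrated at the end. I would partition $[r, u]$ by the points $t_{i} = r + i\delta$, $i = 0, \ldots, m$ with $m = \lceil l/\delta\rceil$, and use the lattice $\eta\mathbb{Z}^{d}$. To each $h \in Lip_{N}([r,u], \mathbb{R}^{d})$ I associate the nodes $y_{i} \in \eta\mathbb{Z}^{d}$ nearest to $h(t_{i})$, so that $|y_{i} - h(t_{i})| \leqslant \tfrac{1}{2}\eta\sqrt{d}$, and let $g_{h}$ be the piecewise-linear interpolant through $(t_{i}, y_{i})$. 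The candidate net will be the finite collection of all such interpolants.

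First I would count the admissible node sequences. The initial node $y_{0}$ lies in the ball of radius $N$ and hence admits at most $C(N/\varepsilon)^{d}$ values; here and below I assume $\varepsilon \leqslant N$, the complementary case being trivial. The crucial observation is that, by the Lipschitz property of $h$,
$$|y_{i+1} - y_{i}| \leqslant |h(t_{i+1}) - h(t_{i})| + \eta\sqrt{d} \leqslant \delta + \eta\sqrt{d},$$
so once $y_{i}$ is fixed the next node $y_{i+1}$ ranges only over the lattice points inside a ball of radius $\delta + \eta\sqrt{d}$, whose number is bounded by a constant $K = K(d, c_{1}, c_{2})$ that is \emph{independent of $i$ and of $\varepsilon$}. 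Consequently the number of interpolants is at most
$$C\Bigl(\frac{N}{\varepsilon}\Bigr)^{d} K^{m} \leqslant C\Bigl(\frac{N}{\varepsilon}\Bigr)^{d}\exp\Bigl(\gamma\frac{u - r}{\varepsilon}\Bigr), \qquad \gamma := \frac{\log K}{c_{1}},$$
which is precisely the asserted cardinality bound (the extra factor $K$ from $m \leqslant l/\delta + 1$ being absorbed into $C$).

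Next I would verify the approximation. For $t \in [t_{i}, t_{i+1}]$, writing $g_{h}(t)$ as a convex combination of $y_{i}$ and $y_{i+1}$ and using $|h(t_{i}) - y_{i}| \leqslant \tfrac{1}{2}\eta\sqrt{d}$ together with $|h(t_{i}) - y_{i+1}| \leqslant \delta + \tfrac{1}{2}\eta\sqrt{d}$, one obtains
$$|h(t) - g_{h}(t)| \leqslant |h(t) - h(t_{i})| + |h(t_{i}) - g_{h}(t)| \leqslant 2\delta + \tfrac{1}{2}\eta\sqrt{d}.$$
Choosing $c_{1}, c_{2}$ so small that $2c_{1} + \tfrac{1}{2}c_{2}\sqrt{d} \leqslant \tfrac{1}{2}$ makes $\{g_{h}\}$ an $(\varepsilon/2)$-net of the required size.

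The one genuine technicality --- which I expect to be the main obstacle, though a mild one --- is that the interpolants $g_{h}$ have slope at most $1 + \eta\sqrt{d}/\delta > 1$ and so need not themselves belong to $Lip_{N}([r,u],\mathbb{R}^{d})$. I would dispose of this by the standard passage from an external to an internal net: replacing each centre $g_{h}$ by an arbitrary genuine element of $Lip_{N}([r,u],\mathbb{R}^{d})$ lying within $\varepsilon/2$ of it (and discarding those whose $\varepsilon/2$-ball misses $Lip_{N}$) yields an $\varepsilon$-net $\mathcal{N}_{\varepsilon} \subseteq Lip_{N}([r,u],\mathbb{R}^{d})$ of the same cardinality. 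The heart of the argument is the $\varepsilon$-uniform bound $K$ on the branching, which is what converts the naive count into the stated product of a power of $N/\varepsilon$ and an exponential in $(u-r)/\varepsilon$; the only real care needed is the simultaneous calibration of $c_{1}$ and $c_{2}$ so that the net is fine enough while the branching stays bounded.
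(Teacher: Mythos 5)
Your proof is correct and complete. For comparison: the paper does not actually prove this lemma --- its entire ``proof'' is a one-line citation to formula 7 in Section 2 of Kolmogorov--Tikhomirov's $\varepsilon$-entropy paper, which gives the metric entropy of classes of Lipschitz functions. What you have written is, in substance, the classical argument underlying that formula: discretize time at scale $\delta \sim \varepsilon$ and space on a lattice of mesh $\eta \sim \varepsilon$, observe that the Lipschitz constraint bounds the per-step branching by a constant $K$ independent of $\varepsilon$ (because $\delta/\eta$ is fixed), and multiply $O\bigl((N/\varepsilon)^{d}\bigr)$ choices for the initial node by $K^{m}$ with $m \sim (u-r)/\varepsilon$ steps, which is exactly the product of a power of $N/\varepsilon$ and an exponential in $(u-r)/\varepsilon$ that the lemma asserts. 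You also correctly handle the two points a careless write-up would miss: the sup-norm error of the interpolant on each subinterval is $2\delta + \tfrac{1}{2}\eta\sqrt{d}$, not just the node error, so the constants $c_{1}, c_{2}$ must be calibrated jointly; and the interpolants form only an external net, so one must pass to an internal $\varepsilon$-net of the same cardinality by picking representatives in $Lip_{N}$. So your proposal supplies a self-contained elementary proof where the paper defers entirely to a reference; the trade-off is a page of computation versus a citation, but the mathematical content is the same.
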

\begin{proof}
This estimate can be easily obtained from formula 7 in Section 2 of \cite{KT}.
\end{proof}

Now let us temporarily fix $N > 0$ and $r, u \in [0, T]$ such that $l = u - r \leqslant \frac{1}{2}$.

Let
$$
\varphi(h, W) := \int_{r}^{u}b(s, W_s + h(s))\, ds.
$$

\begin{lemma}\label{le:main_borel_estimate}
There exist constants $C, \zeta > 0$, independent of $l = u - r$,
 a countable dense subset $\mathcal{N}$ in $Lip_{N}\bigl([r, u], \mathbb{R}^{d}\bigr)$,
independent of $b$, and a set $\Omega'$ such that
$$
P(\Omega \setminus \Omega') \leqslant C\exp\Bigl(-l^{-\zeta}\Bigr)
$$
and for any $h_1, h_2 \in \mathcal{N}$ with $\|h_1 - h_2\|_{\infty} \leqslant 3l$ and $W \in \Omega'$
the following inequality holds:
$$
|\varphi(h_1, W) - \varphi(h_2, W)| \leqslant Cl^{\frac{4}{3}}.
$$
\end{lemma}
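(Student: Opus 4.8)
The plan is to run a multiscale chaining (Kolmogorov/generic-chaining type) argument, combining the sub-Gaussian increment bound of Corollary \ref{cor:refined_estimate} with the entropy bound of Lemma \ref{le:entropy}. The point is that Corollary \ref{cor:refined_estimate} says the increments of the random field $h \mapsto \varphi(h, W)$ on the metric space $(Lip_N([r,u],\mathbb{R}^d), \|\cdot\|_\infty)$ are sub-Gaussian with variance proxy of order $l\,\|h_1 - h_2\|_\infty^2$, since $P(|\varphi(h_1)-\varphi(h_2)| \ge t) \le C\exp(-\alpha t^2 / (l\,\|h_1-h_2\|_\infty^2))$. A single pair at distance $\asymp l$ thus contributes an increment of order $l^{1/2}\cdot l = l^{3/2}$ at bounded deviation parameter; the target exponent $4/3$ and the value of $\zeta$ will emerge from how much we must inflate the deviation parameters across scales to make the union bound small.

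First I would fix dyadic scales $\varepsilon_k = 3l\,2^{-k}$, $k \ge 0$, and use Lemma \ref{le:entropy} to choose nested $\varepsilon_k$-nets $\mathcal{N}_0 \subseteq \mathcal{N}_1 \subseteq \cdots$ of $Lip_N([r,u],\mathbb{R}^d)$, setting $\mathcal{N} := \bigcup_k \mathcal{N}_k$; this is countable, dense, and manifestly independent of $b$. Writing $\pi_k(h)$ for a nearest point of $\mathcal{N}_k$ (with $\pi_k(h) = h$ once $h \in \mathcal{N}_k$, by nestedness), each $h \in \mathcal{N}$ has a finite telescoping expansion $\varphi(h) = \varphi(\pi_0 h) + \sum_{k\ge1}[\varphi(\pi_k h) - \varphi(\pi_{k-1}h)]$, where every link joins points at $\|\cdot\|_\infty$-distance $\le \varepsilon_k+\varepsilon_{k-1}\le 2\varepsilon_{k-1}$. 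For a pair $h_1, h_2 \in \mathcal{N}$ with $\|h_1 - h_2\|_\infty \le 3l$ I would bound $|\varphi(h_1) - \varphi(h_2)|$ by the two chains down to $\mathcal{N}_0$ plus one coarse link $|\varphi(\pi_0 h_1) - \varphi(\pi_0 h_2)|$ between $\mathcal{N}_0$-points at distance $\le 9l$.

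The key step is a level-by-level union bound: applying Corollary \ref{cor:refined_estimate} to every admissible pair at level $k$ gives a bad event of probability $\le |\mathcal{N}_k|^2\,C\exp(-\alpha\lambda_k^2)$. By Lemma \ref{le:entropy}, since $(u-r)/\varepsilon_k = 2^k/3$, one has $\log|\mathcal{N}_k| \le c_1 + c_2 k + \tfrac{\gamma}{3}2^k$ with the term $\tfrac{\gamma}{3}2^k$ dominant. Choosing $\alpha\lambda_k^2 = 2\log|\mathcal{N}_k| + l^{-\zeta} + k$ makes each summand at most $C e^{-l^{-\zeta}-k}$, whence $P(\Omega\setminus\Omega')\le C' e^{-l^{-\zeta}}$. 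On the good set $\Omega'$ the oscillation is then controlled by $\sum_k \lambda_k\, l^{1/2}\varepsilon_{k-1} \asymp l^{3/2}\sum_k 2^{-k}\sqrt{2^k + l^{-\zeta}}$, and splitting this sum at the crossover $2^k \asymp l^{-\zeta}$ yields a total of order $l^{3/2}\,l^{-\zeta/2} = l^{3/2 - \zeta/2}$.

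The delicate point, and the main obstacle, is the simultaneous optimization of these two exponents: $\zeta$ must be chosen so that the failure probability is of the demanded form while the oscillation reaches the target $l^{4/3}$. The computation forces $3/2 - \zeta/2 = 4/3$, i.e. $\zeta = 1/3$, and this is precisely the value making both conclusions hold at once; a larger $\zeta$ would spoil the $l^{4/3}$ modulus, a smaller one would weaken the probability estimate. Two routine matters remain: checking that the subdominant contributions to $\log|\mathcal{N}_k|$ (the $\log(N/l)$ and $O(k)$ terms) are dominated by $l^{-\zeta}$ for small $l$, so that the final constant $C$ can be taken independent of $l$, and verifying that the extra coarse level-$0$ link contributes only another term $\lambda_0\, l^{3/2} \le C l^{4/3}$; both follow directly from the dyadic structure.
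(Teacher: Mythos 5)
Your proposal is correct and follows essentially the same route as the paper: a multiscale chaining over $\varepsilon$-nets of $Lip_N$, combining the sub-Gaussian increment bound of Corollary \ref{cor:refined_estimate} with the entropy bound of Lemma \ref{le:entropy}, a per-level union bound, and a geometric summation of the link contributions on the good event, with the exponent $4/3$ arising from the same trade-off between the modulus and the $\exp(-l^{-\zeta})$ failure probability. The only difference is bookkeeping: you take dyadic mesh sizes $\varepsilon_k = 3l\,2^{-k}$ with implicitly defined deviation parameters, while the paper takes $\varepsilon_k = l^{1+k/4}$ and $\lambda_k = \mu l^{-1/6-k/6}$ explicitly; both yield the same $l^{4/3}$ bound.
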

\begin{proof}
Let $\alpha$ and $\gamma$  be positive constants from Corollary \ref{cor:refined_estimate}
and Lemma \ref{le:entropy}, respectively.
Let us define sequences $\{\varepsilon_{k}\}_{k \geqslant 0}, \ \{\lambda_{k}\}_{k \geqslant 0}$ as follows:
$$
\varepsilon_{k} = l^{1 + \frac{1}{4}k}, \ \lambda_{k} = \mu l^{-\frac{1}{6} - \frac{1}{6}k}, \ \text{where} \ \mu^2 = \frac{\gamma + 1}{\alpha}.
$$
Let  $\pi_{k}$ denote the mapping that sends a function from $Lip_{N}\bigl([r, u], \mathbb{R}^{d}\bigr)$ to the
nearest element in the $\varepsilon_{k}$-net $\mathcal{N}_{\varepsilon_{k}}$.
For each $g_{k + 1} \in \mathcal{N}_{\varepsilon_{k + 1}}$ let
$$
\Omega_{g_{k + 1}} := \bigl\{W: |\varphi(g_{k + 1}, W) - \varphi(\pi_{k}(g_{k + 1}), W)| \geqslant l^{\frac{1}{2}}\varepsilon_{k}\lambda_{k}\bigr\},
$$
$$
\Omega_{k + 1} := \bigcup_{g_{k + 1} \in \mathcal{N}_{\varepsilon_{k + 1}}}\Omega_{g_{k + 1}}.
$$
Let $\theta$ be a positive constant, below we will explain how $\theta$ should be chosen.
Now for each pair for functions $f_1, f_2 \in \mathcal{N}_{\varepsilon_{0}}$ with
$$
\|f_1 - f_2\|_{\infty} \leqslant \theta\varepsilon_0
$$
we introduce the the sets
$$
\Omega_{f_1, f_2} := \bigl\{W: |\varphi(f_1, W) - \varphi(f_2, W)| \geqslant l^{\frac{1}{2}}\theta\varepsilon_{0}\lambda_0\bigr\},
$$
$$
\Omega_0 := \bigcup_{f_1, f_2: \ \|f_1 - f_2\|_{\infty} \leqslant \theta\varepsilon_0} \Omega_{f_1, f_2}.
$$
One can observe that for any $g_{k + 1} \in \mathcal{N}_{\varepsilon_{k + 1}}$ we have
$$
\|g_{k + 1} - \pi_{k}(g_{k + 1})\|_{\infty} \leqslant \varepsilon_{k}.
$$
Applying Corollary \ref{cor:refined_estimate} we obtain the following inequalities:
$$
P(\Omega_{k + 1}) \leqslant \sum_{g_{k + 1} \in \mathcal{N}_{\varepsilon_{k + 1}}}P\bigl(\Omega_{g_{k + 1}}\bigr) \leqslant
C\Bigl(\frac{N}{\varepsilon_{k + 1}}\Bigr)^{d}\exp{\Bigl(\frac{\gamma l}{\varepsilon_{k + 1}} - \alpha\lambda_{k}^2\Bigr)}
$$
$$
P(\Omega_0) \leqslant \sum_{f_1, f_2} P\bigl(\Omega_{f_1, f_2}\bigr) \leqslant
C^{2}\Bigl(\frac{N}{l}\Bigr)^{2d}\exp{\Bigl(\gamma - \alpha \mu^2 l^{-\frac{1}{3}}\Bigr)}
$$
Since
$$
\frac{N^d}{\varepsilon_{k + 1}^d} = N^{d}l^{-\frac{5}{4}d - dk} \leqslant N^{d}l^{-\frac{5}{4}d(k + 1)},
$$
$$
\frac{\gamma l}{\varepsilon_{k + 1}} - \alpha \lambda_{k}^{2} =
\gamma l^{-\frac{1}{4} - \frac{1}{4}k} - \alpha \mu^{2}l^{-\frac{1}{3} - \frac{1}{3}k} =
\gamma l^{-\frac{1}{4} - \frac{1}{4}k} - (\gamma + 1)l^{-\frac{1}{3} - \frac{1}{3}k}
\leqslant
-l^{-\frac{1}{3}(k + 1)},
$$
it can be easily verified that there exist positive constants $\zeta$ and $C$ such that for any
 $k\ge 0$ the following inequalities hold:
$$
P\bigl(\Omega_{k + 1}\bigr) \leqslant C\exp{\bigl(-l^{-\zeta(k + 1)}\bigr)}, \ \ P\bigl(\Omega_{0}\bigr) \leqslant C\exp{\bigl(-l^{-\zeta}\bigr)}.
$$
Let
$$
\Omega' := \Omega \setminus \bigcup_{k = 0}^{\infty}\Omega_{k}, \ \ \mathcal{N} := \bigcup_{k = 0}^{\infty}\mathcal{N}_{\varepsilon_k}.
$$
Taking into account the reasoning above we have
$$
P\bigl(\Omega \setminus \Omega'\bigr) \leqslant C(T, N)\exp{\bigl(-l^{-\zeta}\bigr)},
$$
$$
\mathcal{N} \text{ is a dense subset of } \ Lip_{N}\bigl([r, u], \mathbb{R}^{d}\bigr).
$$
Let $W$ be an arbitrary trajectory in $\Omega'$ and let $h_1, h_2$ be two functions in $\mathcal{N}$ with
$\|h_1 - h_2\|_{\infty} \leqslant 3l.$
Let us assume that $h_1 \in \mathcal{N}_{\varepsilon_{k_1}}, h_2 \in \mathcal{N}_{\varepsilon_{k_2}}$.
Then we can construct two sequences of functions:
$$
h_{1,k_1} = h_1, \, h_{1, k_1 - 1} = \pi_{k_1 - 1}(h_{1,0}), \, \pi_{k_1 - 2}(h_{1, k_1 - 1}), \, \ldots, \, h_{1, 0} = \pi_{0}(h_{1,1}),
$$
$$
h_{2,k_2} = h_2, \, h_{2, k_2 - 1} = \pi_{k_2 - 1}(h_{2,0}), \, \pi_{k_2 - 2}(h_{2, k_2 - 1}), \, \ldots, \, h_{2, 0} = \pi_{0}(h_{2,1}).
$$
It is not difficult to show that due to our choice of $W$ we can
find a positive number $K$ (which does not depend on $\theta$) such that
the following inequalities hold:
$$
\|h_1 - h_{1,0}\|_{\infty} \leqslant Kl, \ \ \|h_2 - h_{2,0}\|_{\infty} \leqslant Kl.
$$
Consequently, taking $2K + 3$ for $\theta$, we obtain
$$
\|h_{1,0} - h_{2, 0}\|_{\infty} \leqslant \theta l,
$$
in particular, the set $\Omega_0$ contains $\Omega_{h_{1, 0}, h_{2, 0}}$.

Now since $W \in \Omega'$ and
$$
l^{\frac{1}{2}}\varepsilon_{k}\lambda_k = \mu l^{\frac{4}{3} + \frac{1}{12}k},
$$
we conclude that there exists a positive constant $C = C(N, T)$ such that the following estimate holds:
$$
|\varphi(h_1, W) - \varphi(h_2, W)| \leqslant Cl^{\frac{4}{3}}.
$$
\end{proof}

\begin{lemma}\label{le:open_set_indicator_estimate}
For any $\varepsilon, N > 0$ there exists $\delta > 0$ such that
for each open set $U \subset [0, 1]\times \mathbb{R}^{n}$ with $\lambda(U) < \delta$
there is a Borel set of Brownian trajectories $\Omega_{\varepsilon}$ with
$P\bigl(\Omega_{\varepsilon}\bigr) \geqslant 1 - \varepsilon$ such that for any
$W \in \Omega_{\varepsilon}, \ h \in Lip_{N}\bigl([0, 1], \mathbb{R}^{d}\bigr)$ the following inequality holds:
$$
\int_{0}^{1}I_{U}(s, W_s + h(s))\, ds \leqslant \varepsilon.
$$
\end{lemma}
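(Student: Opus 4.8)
The plan is to control the occupation integral $J(h,W):=\int_{0}^{1}I_{U}(s,W_s+h(s))\,ds$ uniformly over $h\in Lip_{N}\bigl([0,1],\mathbb{R}^{d}\bigr)$ by exploiting two completely different sources of smallness. First I would split the time interval. Since $0\leqslant I_{U}\leqslant 1$, for every $W$ and every $h$ one has the deterministic bound $\int_{0}^{\eta}I_{U}(s,W_s+h(s))\,ds\leqslant\eta$, so after fixing $\eta\leqslant\varepsilon/3$ it suffices to control the truncated functional $J_{\eta}(h,W):=\int_{\eta}^{1}I_{U}(s,W_s+h(s))\,ds$. For a single $h$ the expectation of $J_{\eta}$ is genuinely small: since the density of $W_s$ is bounded by $(2\pi s)^{-d/2}\leqslant(2\pi\eta)^{-d/2}$ on $[\eta,1]$, Fubini's theorem gives $\mathbb{E}J_{\eta}(h,W)\leqslant(2\pi\eta)^{-d/2}\lambda(U)\leqslant(2\pi\eta)^{-d/2}\delta$, uniformly in $h$. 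This is the only place where the hypothesis $\lambda(U)<\delta$ will be used.

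The difficulty is to pass from a single $h$ to a supremum over the uncountable family $Lip_{N}$. Here I would run the chaining scheme already employed in Lemma \ref{le:main_borel_estimate}, based on the nets $\mathcal{N}_{\varepsilon_{k}}$ of Lemma \ref{le:entropy} and the increment estimate of Corollary \ref{cor:refined_estimate} applied to $b=I_{U}$ (note $\|I_{U}\|_{\infty}\leqslant 1$) on the interval $[\eta,1]$. Choosing scales $\varepsilon_{k}=\varepsilon_{0}2^{-k}$ and thresholds $\lambda_{k}$ so that $\sum_{k}|\mathcal{N}_{\varepsilon_{k+1}}|\,Ce^{-\alpha\lambda_{k}^{2}}$ is as small as desired, one controls, off an event of small probability, all the consecutive-scale increments $|J_{\eta}(\pi_{k+1}h,W)-J_{\eta}(\pi_{k}h,W)|$ by $\lambda_{k}\varepsilon_{k}$. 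Since $I_{U}$ is lower semicontinuous (as $U$ is open) and $\pi_{k}h\to h$ uniformly, Fatou's lemma yields $J_{\eta}(h,W)\leqslant\liminf_{k}J_{\eta}(\pi_{k}h,W)$, which lets me dominate $J_{\eta}(h)$ for an \emph{arbitrary} $h$ by the base value $J_{\eta}(\pi_{0}h)$ plus the sum of these increments. The crucial point is that the oscillation sum $\sum_{k}\lambda_{k}\varepsilon_{k}$ is of order $\varepsilon_{0}^{1/2}$: the entropy estimate of Lemma \ref{le:entropy} forces $\lambda_{k}^{2}\sim\gamma/(\alpha\varepsilon_{k+1})$, whence $\lambda_{k}\varepsilon_{k}\sim\varepsilon_{0}^{1/2}2^{-k/2}$, whose sum tends to $0$ as $\varepsilon_{0}\to 0$.

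It remains to control the base term. Once $\varepsilon_{0}$ is fixed, the net $\mathcal{N}_{\varepsilon_{0}}$ is finite, of some cardinality $M_{0}=M_{0}(\varepsilon_{0},N)$, so from $\mathbb{E}J_{\eta}(h_{0},W)\leqslant(2\pi\eta)^{-d/2}\delta$ Chebyshev's inequality together with a union bound gives $P\bigl(\max_{h_{0}\in\mathcal{N}_{\varepsilon_{0}}}J_{\eta}(h_{0},W)>\varepsilon/3\bigr)\leqslant M_{0}(3/\varepsilon)(2\pi\eta)^{-d/2}\delta$, which I can force below $\varepsilon/2$ by choosing $\delta$ small. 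Intersecting this event with the good chaining event and adding the deterministic $[0,\eta]$ bound then yields $J(h,W)\leqslant\eta+\varepsilon/3+\sum_{k}\lambda_{k}\varepsilon_{k}\leqslant\varepsilon$ for every $h$, on a set $\Omega_{\varepsilon}$ of probability at least $1-\varepsilon$. The order of the choices is essential and is the main subtlety: first $\eta\approx\varepsilon$, then $\varepsilon_{0}$ small enough to absorb the oscillation (smallness here comes from refining the base scale, not from $\delta$), and only then $\delta$ small enough to kill the finite base sum. The main obstacle is precisely disentangling these two mechanisms, since Corollary \ref{cor:refined_estimate} is insensitive to the size of $U$ and by itself produces only an $O(1)$ oscillation; the actual smallness must be extracted solely from the Gaussian expectation bound at the finitely many base points, while the oscillation is made negligible by choosing a sufficiently fine base net.
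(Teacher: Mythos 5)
Your argument is correct: all the steps check out (the Gaussian density bound $\mathbb{E}J_{\eta}(h,W)\leqslant(2\pi\eta)^{-d/2}\lambda(U)$ is uniform in $h$; with $\varepsilon_{k}=\varepsilon_{0}2^{-k}$ the entropy bound forces $\lambda_{k}\sim\varepsilon_{k}^{-1/2}$, so the chaining oscillation is indeed $O(\varepsilon_{0}^{1/2})$ and independent of $\delta$; lower semicontinuity of $I_{U}$ plus Fatou extends the bound from the nets to all of $Lip_{N}$; and the order of choices $\eta\to\varepsilon_{0}\to\delta$ is legitimate because the base net is finite before $\delta$ is chosen). The paper uses the same three ingredients --- Corollary \ref{cor:refined_estimate} with $b=I_{U}$, the entropy/net construction, the density bound at finitely many base functions, and the same Fatou step at the end --- but assembles them differently: instead of running a fresh multi-scale chaining on a single interval, it partitions $[0,1]$ into $M\leqslant 2/l$ subintervals of length $l$ and invokes Lemma \ref{le:main_borel_estimate} as a black box on each, so the oscillation term is $\sum_{s}Cl^{4/3}\leqslant 2Cl^{1/3}$ and the smallness comes from shrinking the subinterval length $l$ rather than from refining the base scale $\varepsilon_{0}$; the base term is then $M\cdot l\varepsilon/4\leqslant\varepsilon/2$ over the finite $3l$-nets $\mathcal{N}'_{s}$. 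The paper's route is shorter because it reuses work already done, and it keeps the nets independent of $U$ in a form directly usable in Lemma \ref{le:pass_to_limit}; your route is more self-contained, makes the two competing sources of smallness (base net versus $\delta$) completely explicit, and handles the $s\to 0$ blow-up of the Gaussian density by the explicit $[0,\eta]$ truncation, a point the paper buries in the phrase ``such $\delta$ obviously exists.'' Either version is a valid proof of the lemma.
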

\begin{proof}
Assume we are given $\varepsilon, N > 0$.
Let us choose $l > 0$ such that
$$
\frac{2}{l}C\exp{\bigl(-l^{-\zeta}\bigr)} \leqslant \frac{\varepsilon}{2}, \ \
\frac{2}{l}Cl^{\frac{4}{3}} \leqslant \frac{\varepsilon}{2},
$$
where
$C, \zeta$ are positive constants from Lemma \ref{le:main_borel_estimate}.
Next let us split the interval $[0, 1]$ into a collection of closed subintervals $\Delta_1, \ldots, \Delta_M$
of length less than $l$, $M \leqslant \frac{2}{l}$.
Applying Lemma \ref{le:main_borel_estimate} to each interval $\Delta_k$
we can find countable sets $\mathcal{N}_{1}, \ldots, \mathcal{N}_{M}$
(here we also use the fact that these subsets do not depend on $b$,
 see Lemma \ref{le:main_borel_estimate}).
Now in each $\mathcal{N}_{s}$ we take a finite $3l$-net that will be denoted by $\mathcal{N}'_{s}$.
Let us pick $\delta > 0$ such that for each open set $U$ with
$\lambda(U) \leqslant \delta$ there exists a set $\Omega'$ such that
$P\bigl(\Omega'\bigr)\geqslant 1 - \frac{\varepsilon}{2}$ and
for any $W \in \Omega',\ h \in \mathcal{N}'_{s}$ one has
$$
\int_{\Delta_s}I_{U}(s, W_s + h(s))\, ds \leqslant \frac{l\varepsilon}{4}
$$
(such $\delta$ obviously exists).
Let us prove that this $\delta$ satisfies the  conditions stated above.

Let us fix an open set $U$ with $\lambda(U) \leqslant \delta$.
Applying Lemma \ref{le:main_borel_estimate} for each $s$ one can find a set $\Omega_s$ with
$$
P\bigl(\Omega \setminus \Omega_s\bigr) \leqslant C\exp\bigl(-l^{-\zeta}\bigr),
$$
such that for any $h_1, h_2 \in \mathcal{N}_{s}$ with
$\|h_1 - h_2\|_{\infty} \leqslant 3l$ and $W \in \Omega_s$
the following inequality holds:
$$
\Bigl|\int_{\Delta_{s}}I_{U}(s, W_s + h_{1}(s))\,ds - \int_{\Delta_{s}}I_{U}(s, W_s + h_{2}(s))\,ds \Bigr| \leqslant Cl^{\frac{4}{3}}.
$$
Let
$$
\Omega_{\varepsilon} := \Omega' \cap \bigcap_{s=1}^{M}\Omega_s.
$$
Let us observe that
$$
P\bigl(\Omega_{\varepsilon}\bigr) \geqslant 1 - \varepsilon,
$$
and for each $h_s \in \mathcal{N}_{s}$
$$
\int_{\Delta_s}I_{U}(s, W_s + h(s))\, ds \leqslant \frac{l\varepsilon}{4}.
$$
Since $U$ is open, applying Fatou's lemma we conclude that the previous inequality is true for all
$h \in Lip_{N}\bigl(\Delta_s, \mathbb{R}^{d}\bigr)$. It is now  trivial to
complete the proof.
\end{proof}

\begin{lemma}\label{le:pass_to_limit}
Let $b: [0, T]\times \mathbb{R}^{d}\mapsto\mathbb{R}^{d}$ be a bounded Borel measurable
mapping with $\|b\|_{\infty} \leqslant 1$.
Then there exists a set $\Omega'$ with $P\bigl(\Omega'\bigr) = 1$ such that for each $W \in \Omega'$
and each sequence
of functions $\{h_k\} \subset Lip_{N}\bigl([0, 1], \mathbb{R}^{d}\bigr)$ pointwise converging
to a function $h$ the following equality holds:
$$
\lim_{k \to \infty} \int_{0}^{1}b(s, W_s + h_k(s))\, ds = \int_{0}^{1}b(s, W_s + h(s))\, ds
$$
\end{lemma}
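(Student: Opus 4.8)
The plan is to approximate the Borel drift $b$ by a continuous function and to use Lemma~\ref{le:open_set_indicator_estimate} to control the error committed on the small set where the two disagree. Two preliminary observations come first. Since each $h_k$ satisfies $|h_k(t)-h_k(s)|\leqslant|t-s|$ and $\max_s|h_k(s)|\leqslant N$, the pointwise limit $h$ inherits the same bounds, so $h\in Lip_{N}([0,1],\mathbb{R}^{d})$; in particular $h$ is continuous and Lemma~\ref{le:open_set_indicator_estimate} applies to $h$ as well as to every $h_k$. Moreover, if $g$ is any bounded continuous function, then $g(s,W_s+h_k(s))\to g(s,W_s+h(s))$ for every $s$ by continuity, so dominated convergence (with dominating constant $\|g\|_{\infty}$) yields
$$
\int_{0}^{1} g(s,W_s+h_k(s))\,ds \longrightarrow \int_{0}^{1} g(s,W_s+h(s))\,ds
$$
for every fixed trajectory $W$, with no exceptional set at all. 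This disposes of the continuous case.

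Next I would carry out the approximation. Fix $\varepsilon>0$ and let $\delta=\delta(\varepsilon,N)>0$ be the constant furnished by Lemma~\ref{le:open_set_indicator_estimate}. By Lusin's theorem there is a closed set $F\subset[0,1]\times\mathbb{R}^{d}$ with $\lambda\big(([0,1]\times\mathbb{R}^{d})\setminus F\big)<\delta$ on which $b$ is continuous, and by Tietze's extension theorem $b|_{F}$ extends to a continuous $g:[0,1]\times\mathbb{R}^{d}\to\mathbb{R}^{d}$ with $\|g\|_{\infty}\leqslant\|b\|_{\infty}\leqslant 1$. The set $U:=([0,1]\times\mathbb{R}^{d})\setminus F$ is open, satisfies $\lambda(U)<\delta$, and contains $\{b\neq g\}$, whence the pointwise bound $|b-g|\leqslant 2\,I_{U}$. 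Lemma~\ref{le:open_set_indicator_estimate} now provides a set $\Omega_{\varepsilon}$ with $P(\Omega_{\varepsilon})\geqslant 1-\varepsilon$ such that for every $W\in\Omega_{\varepsilon}$ and every $\eta\in Lip_{N}([0,1],\mathbb{R}^{d})$
$$
\Big|\int_{0}^{1} b(s,W_s+\eta(s))\,ds-\int_{0}^{1} g(s,W_s+\eta(s))\,ds\Big|
\leqslant 2\int_{0}^{1} I_{U}(s,W_s+\eta(s))\,ds\leqslant 2\varepsilon .
$$
Applying this with $\eta=h$ and with $\eta=h_k$ and inserting $g$, I obtain for every $W\in\Omega_{\varepsilon}$
$$
\limsup_{k\to\infty}\Big|\int_{0}^{1} b(s,W_s+h_k(s))\,ds-\int_{0}^{1} b(s,W_s+h(s))\,ds\Big|\leqslant 4\varepsilon,
$$
because the intermediate difference of the $g$-integrals tends to $0$ by the continuous case above.

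Finally I would assemble a single full-measure set. Taking $\varepsilon=2^{-n}$, the construction yields sets $\Omega_{2^{-n}}$ with $P(\Omega\setminus\Omega_{2^{-n}})\leqslant 2^{-n}$, so by the Borel--Cantelli lemma the set $\Omega':=\liminf_{n}\Omega_{2^{-n}}$ has full measure. For $W\in\Omega'$ one has $W\in\Omega_{2^{-n}}$ for all large $n$, hence the displayed $\limsup$ is bounded by $4\cdot 2^{-n}$ for all such $n$ and therefore equals $0$, which is exactly the asserted convergence. It is important that $\Omega'$ depends only on $b$ and $N$, through the estimate of Lemma~\ref{le:open_set_indicator_estimate}, which is uniform over $\eta\in Lip_N$; it is independent of the particular sequence $\{h_k\}$ and of its limit, as the statement demands.

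The one point demanding genuine care is the reduction to an \emph{open} disagreement set of small Lebesgue measure. This is precisely where Lusin's theorem is used, together with the fact that the complement of the closed Lusin set is open, to manufacture the set $U$ to which Lemma~\ref{le:open_set_indicator_estimate} can be applied; on an infinite-measure space one may first localize to a ball $[0,1]\times\overline{B}(0,R)$ so as to invoke Lusin's theorem in its finite-measure form. Everything else is dominated convergence for the continuous approximant and a routine Borel--Cantelli bookkeeping.
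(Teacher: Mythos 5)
Your proof is correct and follows essentially the same route as the paper: approximate $b$ by a continuous function via Lusin's theorem, control the discrepancy on the small open exceptional set uniformly over $Lip_N$ shifts using Lemma~\ref{le:open_set_indicator_estimate}, handle the continuous approximant by dominated convergence, and conclude with Borel--Cantelli over $\varepsilon_n = 2^{-n}$. The only cosmetic difference is that you invoke Tietze explicitly and note the localization needed for Lusin on an infinite-measure space, details the paper leaves implicit.
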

\begin{proof}
Applying Lemma \ref{le:open_set_indicator_estimate} for each $\varepsilon_{n} = \frac{1}{2^n}$
we can find the corresponding $\delta_{n} > 0$.
Next, applying Lusin's theorem for each $n$ one can find a function
$b_{n} \in C_{b}\bigl([0, 1]\times \mathbb{R}^{d}, \mathbb{R}^{d}\bigr)$
and an open set $U_{n} \subset [0, 1]\times \mathbb{R}^{d}$ such that
$$
\|b_{n}\|_{\infty} \leqslant 1, \ \ \lambda(U_n) \leqslant \delta_{n}, \ \  b_{n}(t, x) = b(t, x) \ \text{for all} \ (t, x) \notin U_n.
$$
Then there exists a set $\Omega_n$ with the following properties:
$$
P\bigl(\Omega_n) \geqslant 1 - \varepsilon_{n}
$$
and for any $W \in \Omega_{n}, \ h \in Lip_{N}\bigl([0, 1], \mathbb{R}^{d}\bigr)$
$$
\int_{0}^{1}I_{U}(s, W_s + h(s))\,ds \leqslant \varepsilon_n.
$$
Next we observe that for any $n$
\begin{multline*}
\int_{0}^{1}b_{n}(s, W_s + h(s))\,ds - 2\int_{0}^{1}I_{U}(s, W_s + h(s))\,ds \leqslant\\
\leqslant \int_{0}^{1}b(s, W_s + h(s))\,ds \leqslant\\
\leqslant \int_{0}^{1}b_{n}(s, W_s + h(s))\,ds + 2\int_{0}^{1}I_{U}(s, W_s + h(s))\,ds. \\
\end{multline*}
Therefore, for each $W \in \Omega_{n}$ and each sequence of functions
$\{h_k\} \subset Lip_{N}\bigl([0, 1], \mathbb{R}^{d}\bigr)$ pointwise converging to $h$,
 the following inequalities hold:
\begin{multline*}
\int_{0}^{1}b(s, W_s + h(s))\, ds - 4\varepsilon_n \leqslant
\int_{0}^{1}b_{n}(s, W_s + h(s))\, ds - 2\varepsilon_n \leqslant \\
\leqslant \liminf_{k \to \infty} \int_{0}^{1}b_{n}(s, W_s + h_k(s))\, ds - 2\varepsilon_n \leqslant
\liminf_{k \to \infty} \int_{0}^{1}b(s, W_s + h_k(s))\,ds\\
\int_{0}^{1}b(s, W_s + h(s))\, ds + 4\varepsilon_n \geqslant
\int_{0}^{1}b_{n}(s, W_s + h(s))\, ds + 2\varepsilon_n \geqslant \\
\geqslant \limsup_{k \to \infty} \int_{0}^{1}b_{n}(s, W_s + h_k(s))\, ds + 2\varepsilon_n \geqslant
\limsup_{k \to \infty} \int_{0}^{1}b(s, W_s + h_k(s))\,ds\\
\end{multline*}
Let
$$
\Omega' := \liminf_{n \to \infty}\Omega_n = \bigcup_{m = 1}^{\infty}\bigcap_{n = m}^{\infty}\Omega_n.
$$
Since
$$
P\bigl(\Omega_n) \geqslant 1 - \varepsilon_{n} \
\text{and} \ \sum\limits_{n = 1}^{\infty}\varepsilon_n < \infty,
$$
 by the Borel--Cantelli lemma $P\bigl(\Omega'\bigr) = 1$.
 It is now trivial to complete the proof.
\end{proof}

\begin{lemma}\label{le:refined_main_borel_estimate}
There exist constants $C, \zeta > 0$, independent of $l = u - r$, and a set $\Omega'$ such that
$$
P(\Omega \setminus \Omega') \leqslant C\exp\Bigl(-l^{-\zeta}\Bigr)
$$
and for any $h_1, h_2 \in \mathcal{N}$ with $\|h_1 - h_2\|_{\infty} \leqslant 4l$, $W \in \Omega'$
the following inequality holds:
$$
|\varphi(h_1, W) - \varphi(h_2, W)| \leqslant Cl^{\frac{4}{3}}.
$$
\end{lemma}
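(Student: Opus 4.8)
The plan is to derive this refined estimate directly from Lemma \ref{le:main_borel_estimate} by an interpolation (triangle-inequality) argument, exploiting the convexity of the set $Lip_{N}\bigl([r, u], \mathbb{R}^{d}\bigr)$ and the density of the net $\mathcal{N}$. The essential observation is that, for a fixed trajectory $W$ in the good event, the conclusion of Lemma \ref{le:main_borel_estimate} holds \emph{simultaneously} for every admissible pair of functions in $\mathcal{N}$; hence, once we insert a suitable intermediate function $g \in \mathcal{N}$, we may apply that lemma twice without having to enlarge the exceptional set.

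Concretely, I would keep the very same constants $C, \zeta$ and the very same set $\Omega'$ furnished by Lemma \ref{le:main_borel_estimate}, so that the probability bound $P(\Omega \setminus \Omega') \leqslant C\exp(-l^{-\zeta})$ is inherited verbatim. Fix $W \in \Omega'$ and $h_1, h_2 \in \mathcal{N}$ with $\|h_1 - h_2\|_\infty \leqslant 4l$, and set $m := \tfrac{1}{2}(h_1 + h_2)$. A convex combination of two $1$-Lipschitz functions bounded by $N$ is again $1$-Lipschitz and bounded by $N$, so $m \in Lip_{N}\bigl([r, u], \mathbb{R}^{d}\bigr)$, and $\|h_i - m\|_\infty = \tfrac{1}{2}\|h_1 - h_2\|_\infty \leqslant 2l$ for $i = 1, 2$.

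Since $\mathcal{N}$ is dense in $Lip_{N}\bigl([r, u], \mathbb{R}^{d}\bigr)$, I would then choose $g \in \mathcal{N}$ with $\|g - m\|_\infty \leqslant \tfrac{l}{2}$, so that $\|h_i - g\|_\infty \leqslant 2l + \tfrac{l}{2} < 3l$ for $i = 1, 2$. Both pairs $(h_1, g)$ and $(g, h_2)$ then meet the separation hypothesis $\leqslant 3l$ of Lemma \ref{le:main_borel_estimate}, and because $W \in \Omega'$ that lemma yields $|\varphi(h_1, W) - \varphi(g, W)| \leqslant Cl^{\frac{4}{3}}$ together with $|\varphi(g, W) - \varphi(h_2, W)| \leqslant Cl^{\frac{4}{3}}$. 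The triangle inequality then gives $|\varphi(h_1, W) - \varphi(h_2, W)| \leqslant 2Cl^{\frac{4}{3}}$, and absorbing the factor $2$ into the constant finishes the argument.

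I do not expect a genuine obstacle here; the only points needing care are the convexity of $Lip_{N}$ (so that the interpolating midpoint remains admissible) and the fact that the good event of Lemma \ref{le:main_borel_estimate} governs all admissible pairs at once, which is precisely what allows $g$ to be selected \emph{after} $W$ is fixed while keeping $\Omega'$ unchanged. An alternative would be to rerun the proof of Lemma \ref{le:main_borel_estimate} line by line with the threshold $\theta = 2K + 3$ replaced by $\theta = 2K + 4$, which absorbs the larger separation $4l$ at the bottom scale; but the interpolation argument is shorter and treats the previous lemma as a black box.
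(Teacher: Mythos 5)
Your argument is correct. The paper's own proof is a one\--line remark that the lemma ``follows directly from Lemma \ref{le:main_borel_estimate} and Lemma \ref{le:pass_to_limit}'', i.e.\ it implicitly inserts an intermediate function $m\in Lip_{N}\bigl([r,u],\mathbb{R}^{d}\bigr)$ between $h_1$ and $h_2$ and then uses the almost\--sure continuity of $h\mapsto\varphi(h,W)$ along pointwise limits (Lemma \ref{le:pass_to_limit}) to transfer the $3l$\--estimate from approximating elements of $\mathcal{N}$ to $m$ itself. You achieve the same splitting while bypassing Lemma \ref{le:pass_to_limit} entirely: since the midpoint $m=\tfrac12(h_1+h_2)$ satisfies $\|h_i-m\|_{\infty}\leqslant 2l$ with strict slack below the threshold $3l$, and $\mathcal{N}$ is dense in the convex set $Lip_{N}$, you can pick the intermediate point $g$ \emph{inside} $\mathcal{N}$ with $\|h_i-g\|_{\infty}\leqslant \tfrac52 l<3l$ and apply Lemma \ref{le:main_borel_estimate} twice as a black box on its own good event $\Omega'$. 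The two points you flag --- convexity of $Lip_{N}$ and the fact that the exceptional set of Lemma \ref{le:main_borel_estimate} is uniform over all admissible pairs, so $g$ may be chosen after $W$ --- are exactly the ones that need checking, and both hold. Your route is marginally more economical (it keeps $\Omega'$ unchanged rather than intersecting it with the full\--measure set of Lemma \ref{le:pass_to_limit}, and costs only a factor $2$ in the constant), while the paper's route via Lemma \ref{le:pass_to_limit} is the one that generalizes when the intermediate function cannot be perturbed back into the countable net.
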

\begin{proof}
This assertion  follows directly from Lemma \ref{le:main_borel_estimate} and Lemma \ref{le:pass_to_limit}.
\end{proof}

We can now proceed to the proof of Theorem \ref{th:borel_case}.

\begin{proof}
Let us fix a positive number $N$.
Let $C, \zeta$ be constants found in Lemma \ref{le:refined_main_borel_estimate}.
For each $k$ we split the interval $[0, 1]$ into $M = 2^k$ closed subintervals
$$
\Bigl[0, \frac{1}{M}\Bigr], \ldots, \Bigl[\frac{M-1}{M}, M\Bigr].
$$
Applying Lemma \ref{le:refined_main_borel_estimate} to each interval
$\bigl[\frac{i}{M}, \frac{i + 1}{M}\bigr]$ we can find the corresponding sets $\Omega_{k, i}$.
Let
$$
\Omega_k := \bigcap_{i = 0}^{M - 1}\Omega_{k, i}.
$$
With the help of the Borel--Cantelli lemma it is easy to show that the set
$$
\Omega' := \liminf_{k \to \infty}\Omega_k = \bigcup_{K = 1}^{\infty}\bigcap_{k = K}^{\infty}\Omega_k
$$
has probability $1$.
Removing, if necessary,  a set of zero probability from $\Omega'$,
 we can assume that for each
$W \in \Omega'$ there exists a H\"older-continuous flow ensured by Proposition \ref{pr:flow}.
Let us show that for each $W \in \Omega'$ such that
$$
|x| + \max\limits_{t \in [0, 1]}|W_t| + 1 \leqslant N,
$$
the equation \ref{eq:main} has a unique solution.
Indeed, let $Y_t$ be a solution to the equation \ref{eq:main}.
It is not difficult to see that $|Y_t| \leqslant N$ for each $t \in [0, 1]$.
Due to our choice of $\Omega'$ there exists $K = K(\omega)$ such that for
all $k \geqslant K$ the Brownian trajectory $W$ belongs to $\Omega_k$.
Let
$$
M' = 2^{k'}, \ \ r = \frac{i}{M'}, \ \ \text{where} \ k' \geqslant K.
$$
Let us define an auxiliary function $f$ on the interval $[0, r]$ by the following formula:
$$
f(t) := X(x, 0, r, W) - X(Y_t, t, r, W).
$$
We observe that for any $s \leqslant t$, by to the definition of a flow we have
\begin{multline*}
f(t) - f(s) = -X\bigl(Y_t, t, r, W\bigr) + X\bigl(Y_s, s, r, W\bigr) = \\
= -X\bigl(Y_t, t, r, W\bigr) + X\bigl(X(Y_s, s, t, W), r, W\bigr).\\
\end{multline*}
Hence there exists a positive constant $C = C(N, W)$ such that
$$
|f(t) - f(s)| \leqslant C|Y_t - X(Y_s, s, t, W)|^{\frac{4}{5}}.
$$
The difference $Y_t - X(Y_s, s, t, W)$ can be represented as follows:
\begin{multline*}
Y_t - X(Y_s, s, t, W) = \\
=\int_{s}^{t}b\Bigl(u, Y_s + W_u - W_s + \int_{s}^{u}b(r, Y_r)\,dr\Bigr)\,du - \\
\int_{s}^{t}b\Bigl(u, Y_s + W_u - W_s + \int_{s}^{u}b(r, X_r)\,dr\Bigr)\,du = \\
= \int_{s}^{t}b\bigl(u, W_u + h_1(u)\bigr)\,du - \int_{s}^{t}b\bigl(u, W_u + h_2(u)\bigr)\,du,
\end{multline*}
where
$$
h_1(u) = Y_s - W_s + \int_{s}^{u}b(r, Y_r)\,dr, \ \ h_2(u) = Y_s - W_s + \int_{s}^{u}b(r, X_r)\,dr.
$$
Let $k \geqslant k'$ и $M = 2^{k}$.
If we take $s, t$ of the form $\frac{i}{M}$ and $\frac{i + 1}{M}$, respectively,
then we obtain the following estimate:
$$
\Bigl|f\Bigl(\frac{i + 1}{M}\Bigr) - f\Bigl(\frac{i}{M}\Bigr)\Bigr| \leqslant \Bigl(\frac{C}{M^{\frac{4}{3}}}\Bigr)^{\frac{4}{5}},
$$
and consequently
$$
|f(r)| \leqslant \frac{C}{M^{\frac{1}{15}}}.
$$
Due to the arbitrariness of $k$ we conclude
$$
f(r) = X(x, 0, r, W) - Y_r = 0.
$$
Since $r$ was an arbitrary dyadic number in $[0, 1]$  with  a sufficiently large denominator,
the continuity of $Y_t$ and $X(x, 0, t, W)$ implies the equality $Y_t = X(x, 0, t, W)$ for each $t \in [0, 1]$.
The proof is complete.
\end{proof}

\centerline{{\bf\it Acknowledgment}}

I would like to thank V.I. Bogachev for fruitful discussions and comments.
A part of this work was done during a visit to the Mathematical Institute of Burgundy.
I would like to thank Shizan Fang for his hospitality and useful remarks.
This work has been supported by the RFBR project 12-01-33009.


\begin{thebibliography}{}

\bibitem{D}
Davie A.M. Uniqueness of solutions of stochastic differential equations.
International Mathematics Research Notices, 2007, V.~2007.

\bibitem{B} Bogachev V.I.
Differentiable measures and the Malliavin calculus.
Amer. Math. Soc., Rhode Island, Providence, 2010.

\bibitem{FF}
Fedrizzi E., Flandoli F.
H\"older flow and differentiability for SDEs with nonregular drift.
Stoch. Anal. Appl., 2013, V. 31, N4, P. 708--736.

\bibitem{FF2}
Fedrizzi E., Flandoli F.
Pathwise uniqueness and continuous dependence for SDEs with nonregular drift.
arXiv preprint arXiv:1004.3485, 2010.

\bibitem{VK}
Van Kampen E.R. Remarks on systems of ordinary differential equations.
Amer. J. Math., 1937, V. 59, N1, P. 144--152.

\bibitem{F}
Flandoli F. Regularizing properties of Brownian paths and a result of Davie.
Stochastics and Dynamics, 2011, V. 11, N02n03,  P. 323--331.

\bibitem{FPS}
F\"ollmer H., Protter P., Shiryaev A.N.
 Quadratic covariation and an extension of Ito's formula.
 Bernoulli, 1995, V.1, N1-2, P. 149--169.

\bibitem{KT}
Kolmogorov A.N., Tikhomirov V. M.
 $\varepsilon$-entropy and $\varepsilon$-capacity of sets in function spaces.
Uspekhi Matem. Nauk, 1959, V. 14, N2, P. 3--86 (in Russian). English translation: Amer. Math. Soc. Transl. Ser. 2, 1961, V. 17, P. 277--364

\bibitem{KR}
Krylov N.V., R\"ockner M. Strong solutions of stochastic equations with
singular time dependent drift. Probab. Theory Related Fields, 2005, V. 131, N2, P. 154--196.

\end{thebibliography}
\end{document}